\DeclareMathOperator*{\argmax}{arg\,max}
\DeclareMathOperator*{\argmin}{arg\,min}
\theoremstyle{thmstyleone}%
\newtheorem{theorem}{Theorem}[section]
\newtheorem{lemma}[theorem]{Lemma}
\newtheorem{corollary}[theorem]{Corollary}
\newtheorem{remark}[theorem]{Remark}
\newcommand{\D}{D_{\mathrm{KL}}^{\pi}}
\begin{document}

\title[Information-theoretic minimax and submodular optimization algorithms for multivariate Markov chains]{Information-theoretic minimax and submodular optimization algorithms for multivariate Markov chains}

\author[1]{\fnm{Zheyuan} \sur{Lai}}\email{\href{mailto:zheyuan_lai@u.nus.edu}{\texttt{zheyuan\_lai@u.nus.edu}}}

\author*[1]{\fnm{Michael C.H.} \sur{Choi}}\email{\href{mailto:mchchoi@nus.edu.sg}{\texttt{mchchoi@nus.edu.sg}}}

\affil[1]{\orgdiv{Department of Statistics and Data Science}, \orgname{National University of Singapore}, \orgaddress{\street{Level 7, 6 Science Drive 2}, \postcode{117546}, \country{Singapore}}}

%%==================================%%
%% Sample for unstructured abstract %%
%%==================================%%

\abstract{We study an information-theoretic minimax problem for finite multivariate Markov chains on $d$-dimensional product state spaces. Given a family $\mathcal B=\{P_1,\ldots,P_n\}$ of $\pi$-stationary transition matrices and a class $\mathcal F = \mathcal{F}(\mathbf{S})$ of factorizable models induced by a partition $\mathbf S$ of the coordinate set $\llbracket d \rrbracket$, we seek to minimize the worst-case information loss by analyzing
\begin{align}\label{eq:abs}
    \min_{Q\in\mathcal F}\max_{P\in\mathcal B} \D(P\|Q),
\end{align}
where $\D(P\|Q)$ is the $\pi$-weighted KL divergence from $Q$ to $P$. We recast \eqref{eq:abs} into a concave maximization problem over the $n$-probability-simplex via strong duality and Pythagorean identities that we derive. This leads us to formulate \eqref{eq:abs} into an information-theoretic game and show that a mixed strategy Nash equilibrium always exists; and propose a projected subgradient algorithm to approximately solve \eqref{eq:abs} with provable guarantee. By transforming \eqref{eq:abs} into an orthant submodular function in $\mathbf{S}$, this motivates us to consider a max-min-max submodular optimization problem and investigate a two-layer subgradient–greedy procedure to approximately solve this generalization. Numerical experiments for Markov chains on the Curie–Weiss and Bernoulli–Laplace models illustrate the practicality of these proposed algorithms and reveals sparse optimal structures in these examples.}

\keywords{Markov chains, minimax optimization, subgradient, submodularity, greedy algorithm, Kullback-Leibler divergence}
\pacs[AMS 2020 subject classification]{49J35, 60J10, 60J22, 90C27, 91A05, 94A15, 94A17}

\maketitle

\section{Introduction}
Multivariate Markov chains on product spaces $\mathcal X=\mathcal X^{(1)}\times\ldots\times\mathcal X^{(d)}$ with $d \in \mathbb{N}$ arise naturally throughout stochastic modeling, Markov chain Monte Carlo (MCMC), and interacting particle systems. In high dimensions when $d$ is large, it is natural, both for analysis and for algorithm design, to approximate a complex transition matrix $P$ by a simpler model that \emph{factorizes} across groups of coordinates. This paper develops an information-theoretic framework, associated structure theorems, and algorithms for selecting such factorizations and for aggregating multiple candidate Markov chains in a robust or minimax sense.

\paragraph{Related works.} This manuscript centers on the following three main threads: information projections of multivariate Markov chains, minimax information aggregation, and submodular optimization over partition. In the literature, \cite{choi2024geometry} views factorization as minimizing the KL divergence between the original chain and the set of factorizable Markov chains; \cite{lacker2025independent} introduces the independent projection of diffusion processes through the lens of relative entropy minimization in the space of product measures. On the topic of minimax information aggregation, \cite{Haussler97,Gushchin06} study minimax optimization under KL divergence and $f$-divergences of probability measures, while \cite{HafezKolahi2022Information} analyzes minimax excess risk as a zero-sum game between a learner and Nature. As for (robust) submodular optimization over partition, \cite{nemhauser1978analysis} and \cite{ward2016maximizing} propose greedy-based algorithms when the partition set function is submodular or $k$-submodular; \cite{orlin2018robust} handles robust submodular optimization with bi-level optimization; \cite{bogunovic2017robust} proposes novel algorithm with non-uniform partitions; \cite{staib2019robust} applies continuous submodular functions to address the robust budget allocation problem.

We proceed to describe the contributions and the organizations of the paper in the rest of this Section.

\paragraph{Problem setup.}
We first fix notations and quickly recall several established results in submodularity and information projections of Markov chains in Section \ref{sec:prelim}, followed by introducing the information-theoretic minimax problem in Section \ref{sec:minimax}.

Precisely, we denote $\mathcal{L}(\mathcal{X})$ to be the set of transition matrices on $\mathcal{X}$. Let $\mathcal B=\{P_1,\ldots,P_n\}\subset\mathcal L(\mathcal X)$ be a family of $\pi$-stationary transition matrices on $\mathcal{X}$ and let $\mathbf S=(S_1,\ldots,S_m)$ be a partition of $\llbracket d\rrbracket$, where we write $\llbracket d \rrbracket := \{1, 2, \ldots, d\}$. We consider the class of factorizable transition matrices with respect to the partition $\mathbf{S}$
\[
\mathcal{F} = \mathcal F(\mathbf S) :=\{Q \in \mathcal{L}(\mathcal{X});~Q=Q^{(S_1)}\otimes\cdots\otimes Q^{(S_m)}\},
\]
and the associated minimax approximation problem
\begin{equation}\label{eq:intro-minimax}
\min_{Q\in\mathcal F}\max_{P\in\mathcal B} \D(P\|Q).
\end{equation}
Here, we denote $P^{(S_j)}$ to be the projection of $P$ onto the coordinate set $S_j$, which we call the keep-$S_j$-in transition matrix, while $\otimes_{j=1}^m$ is the $m$-fold tensor product. Problem \eqref{eq:intro-minimax} considers minimizing the worst-case information loss when replacing any $P\in\mathcal B$ by a factorizable proxy $Q$ with respect to $\mathbf{S}$.

\paragraph{Averaging, information projection and a two-person game.}
In Section \ref{sec:minimax}, through strong duality and Pythagorean identities, we establish that
\begin{equation}\label{eq:intro-dual}
    \min_{Q\in\mathcal F(\mathbf S)}\max_{P\in\mathcal B} \D (P\|Q)
    = \max_{\mathbf w\in\mathcal S_n}\sum_{i=1}^n w_i\, \D \left(P_i \| \otimes_{j=1}^m \overline P(\mathbf w)^{(S_j)}\right),
\end{equation}
which transforms \eqref{eq:intro-minimax} into a concave maximization problem over the $n$-probability-simplex $\mathcal{S}_n$, where $\overline{P}(\mathbf{w}) := \sum_{i=1}^n w_i P_i$ is the $\mathbf{w}$-weighted average of the matrices in $\mathcal{B}$. 

We interpret the minimax problem \eqref{eq:intro-minimax} in a two-person zero-sum game in Section \ref{sec:game}, and prove that a mixed strategy Nash equilibrium always exists. This generalizes the reversiblization entropy games in \cite{choi2023markov} to the context of factorizations of multivariate Markov chains as in this paper.

\paragraph{Orthant submodularity and optimal partition.}
In Section \ref{sec:maxminmax}, for fixed $\mathbf{w} \in \mathcal{S}_n$, we prove that the map
\[
m^{\llbracket d \rrbracket} \ni \mathbf{S} \mapsto \sum_{i=1}^n w_i \D (P_i \| (\otimes_{j=1}^{m - 1} \overline{P}(\mathbf{w})^{(S_j)}) \otimes \overline{P}(\mathbf{w})^{(-\mathrm{supp}(\mathbf{S}))})
\]
is orthant submodular, where we define $\mathrm{supp}(\mathbf{S}) := \cup_{i=1}^{m-1} S_i.$ This result enables greedy-style algorithms with provable guarantees when designing partitions \cite{lai2025information}.

\paragraph{Algorithms.}
(i) \emph{A projected subgradient algorithm.} In Section \ref{sec:subgrad}, We derive explicit supergradients of the concave dual \eqref{eq:intro-dual} in $\mathbf w$. We propose and analyze a subgradient algorithm and prove $\mathcal O(t^{-1/2})$ convergence in objective value, where $t$ is the number of iterations of the algorithm.

(ii) \emph{A two-layer subgradient-greedy algorithm.} In Section \ref{sec:maxminmax}, we consider the problem of jointly optimizing over both $\mathbf S$ and $\mathbf w$. Specifically, we cast a max–min–max problem whose inner value admits \eqref{eq:intro-dual}. For fixed $\mathbf S$, we iterate $\mathbf w$ by projected subgradients; holding $\mathbf w$ fixed, we exploit orthant submodularity to perform a generalized distorted greedy update on $\mathbf S$, yielding a practical alternating procedure with a provable lower bound.

\paragraph{Experiments.}
In Section \ref{sec:num}, we give numerical experiments on the Curie–Weiss and Bernoulli–Laplace models. We investigate multivariate Markov chains in these models and observe (a) \emph{sparse} optimal mixtures that put mass on a few extrema (e.g., base $P$ and an accelerated or lazy variant) and (b) interpretable partitions that capture dominant dependence while controlling the worst-case KL loss. These case studies corroborate the theory and highlight the practicality of the proposed algorithms.

\section{Preliminaries}\label{sec:prelim}

\subsection{Projection and averaging of multivariate Markov chains}
We consider a finite $d$-dimensional state space described by $\mathcal{X}= \mathcal{X}^{(1)} \times \ldots \times \mathcal{X}^{(d)}$. For $S\subseteq \llbracket d \rrbracket$, we write $\mathcal{X}^{(S)} = \times_{i \in S} \mathcal{X}^{(i)}$ and $\mathcal{X}^{(-S)} = \times_{i \notin S} \mathcal{X}^{(i)}$, which are subsets of $\mathcal{X}$. We denote $\mathcal{L}(\mathcal{X})$ to be the set of transition matrices on $\mathcal{X}$, and $\mathcal{P}(\mathcal{X}) = \{\pi;~\min_{x \in \mathcal{X}} \pi(x) > 0, \sum_{x} \pi(x) = 1\}$ to be the set of probability masses with full support on $\mathcal{X}$. We say that $P \in \mathcal{L}(\mathcal{X})$ is $\pi$-\textbf{stationary} with $\pi \in \mathcal{P}(\mathcal{X})$ if it satisfies $\pi = \pi P$.

We then recall the definition of the \textbf{tensor product} of transition matrices and probability masses, see e.g. Exercise 12.6 of \cite{levin2017markov}. Define, for $M_l \in \mathcal{L}(\mathcal{X}^{(l)})$, $\pi_l \in \mathcal{P}(\mathcal{X}^{(l)})$, $x^l,y^l \in \mathcal{X}^{(l)}$ for $l \in \{i,j\}, i \neq j \in \llbracket d \rrbracket,$
\begin{align*}
    (M_i \otimes M_j)((x^i, x^j), (y^i, y^j)) &:= M_i(x^i, y^i) M_j(x^j, y^j), \\
    (\pi_i \otimes \pi_j)(x^i, x^j) &:= \pi_i(x^i) \pi_j(x^j).
\end{align*}

To define the projection operations, we recall the definition of keep-$S$-in and leave-$S$-out matrices of a given transition probability matrix $P$, see Section 2.2 of \cite{choi2024geometry}. For \(\pi \in \mathcal{P}(\mathcal{X})\), \(P \in \mathcal{L} (\mathcal{X})\), \(S \subseteq \llbracket d \rrbracket\), and any \((x^{(-S)}, y^{(-S)}) \in \mathcal{X}^{(-S)} \times \mathcal{X}^{(-S)}\), we define the $\textbf{leave}$-$S$-\textbf{out} transition matrix with respect to $\pi$ to be $P_\pi^{(-S)}$ with entries given by
    \[
    P_\pi^{(-S)}(x^{(-S)}, y^{(-S)}) :=
    \frac{\sum_{(x^{(S)}, y^{(S)}) \in \mathcal{X}^{(S)} \times \mathcal{X}^{(S)}} \pi(x^1, \dots, x^d) P((x^1, \dots, x^d), (y^1, \dots, y^d))}
    {\sum_{x^{(S)} \in \mathcal{X}^{(S)}} \pi(x^1, \dots, x^d)}.
    \]
The \textbf{keep}-\(S\)-\textbf{in} transition matrix of \(P\) with respect to \(\pi\) is 
    \[
    P_\pi^{(S)} := P_\pi^{(-\llbracket d \rrbracket \setminus S)} \in \mathcal{L}(\mathcal{X}^{(S)}).
    \]
When \(P\) is \(\pi\)-stationary, we omit the subscript $\pi$ and write directly \(P^{(-S)}, P^{(S)}\). We also apply the convention of $P^{(\emptyset)} = P^{(-\llbracket d \rrbracket)} = 1$.

We then define the \textbf{averaging operation} $\overline{P}(\mathbf{w})$ of a transition probability matrix $P$. We define $\mathcal{S}_n$ as the $n$-probability-simplex such that\begin{align*}
    \mathcal{S}_n = \left\{\mathbf{w} = (w_1, \ldots, w_n) \in \mathbb{R}_+^n;~ \sum_{i=1}^n w_i = 1 \right\}.
\end{align*}
Given a set of $\pi$-stationary transition probability matrices $\mathcal{B} = \{P_1, \ldots, P_n\}$, we define the transition probability matrix weighted by $\mathbf{w} = (w_1, \ldots, w_n) \in \mathcal{S}_n$ as $\overline{P} (\mathbf{w})$ by  \begin{align*}
    \overline{P} = \overline{P}(\mathbf{w}) := \sum_{i=1}^n w_i P_i.
\end{align*}
We see that $\overline{P}$ is also $\pi$-stationary because \begin{align*}
    \pi \overline{P} = \pi \left(\sum_{i=1}^n w_i P_i\right) = \sum_{i=1}^n w_i (\pi P_i) = \sum_{i=1}^n w_i \pi = \pi.
\end{align*}

We project each $P_i$ onto $S\in 2^{\llbracket d \rrbracket}$ and denote the weighted projection as 
\begin{align*}
    \overline{P}(S, \mathbf{w}) := \sum_{i=1}^n w_i P_i^{(S)}.
\end{align*}
As a result, we have
\begin{align*}
    \overline{P}^{(S)} = \left(\sum_{i=1}^n w_i P_i\right)^{(S)} = \sum_{i=1}^n w_i P_i^{(S)} = \overline{P}(S, \mathbf{w}),
\end{align*}
which means that the averaging operation commutes with the projection operation.

\subsection{Some information-theoretic results in Markov chain theory}
We first recall the Shannon entropy of a probability distribution and the entropy rate of a transition probability matrix, see Section 1 of \cite{polyanskiy2025information}. For probability distribution $\pi$ on $\mathcal{X}$, its \textbf{Shannon entropy} is defined as \begin{align*}
    H(\pi) := - \sum_{x\in\mathcal{X}} \pi(x) \ln{\pi(x)},
\end{align*}
while for $\pi$-stationary $P \in \mathcal{L}(\mathcal{X})$, the \textbf{entropy rate} of $P$ is defined as \begin{align*}
    H(P) := -\sum_{x \in \mathcal{X}} \sum_{y \in \mathcal{X}} \pi(x) P(x, y) \ln{P(x, y)},
\end{align*}
where the standard convention of $0\ln{0} := 0$ applies.

We then recall the KL divergence between Markov chains (see Definition 2.1 of \cite{choi2024geometry}). For given $\pi \in \mathcal{P}(\mathcal{X})$ and transition matrices $M, L \in \mathcal{L}(\mathcal{X})$, we define the \textbf{KL divergence} from $L$ to $M$ with respect to $\pi$ as \begin{align*}
    \D (M \| L) := \sum_{x \in \mathcal{X}} \pi (x) \sum_{y\in \mathcal{X}} M(x, y) \ln{\frac{M(x, y)}{L(x, y)}}
\end{align*}
where the convention of $0 \ln \frac{0}{a} := 0$ applies for $a \in [0,1]$.

We then prove a Pythagorean identity related to the averaging operation and the KL divergence of transition matrices. 

\begin{lemma}\label{lem:pyth_KL_m}
    For given $\mathbf{w} \in \mathcal{S}_n$, $\pi \in \mathcal{P}(\mathcal{X})$, $P_i, Q \in \mathcal{L}(\mathcal{X})$ for $i \in \llbracket n \rrbracket$ where $P_i$ are all $\pi$-stationary, we choose mutually disjoint sets $S_1, \ldots, S_m$ with $\sqcup_{i=1}^m S_i = \llbracket d \rrbracket$, and the following identity holds:
    \begin{align}\label{eq:pyth_KL_m}
        \sum_{i=1}^n w_i \D (P_i \| \otimes_{j=1}^m Q^{(S_j)}) = \sum_{i=1}^n w_i \D (P_i \| \otimes_{j=1}^m \overline{P}^{(S_j)}) + \sum_{j=1}^m D_\mathrm{KL}^{\pi^{(S_j)}} (\overline{P}^{(S_j)} \| Q^{(S_j)}).
    \end{align}
    In particular, we have the following minimization result:
    \begin{align*}
        \min_{Q;~ Q = \otimes_{j=1}^m Q^{(S_j)}} \sum_{i=1}^n w_i \D(P_i \| Q) = \sum_{i=1}^n w_i \D (P_i \| \otimes_{j=1}^m \overline{P}^{(S_j)}).
    \end{align*}
\end{lemma}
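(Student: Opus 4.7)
The plan is to prove the identity by direct expansion of the weighted KL divergence, using the commutation of averaging and projection recorded just above the lemma. First I would insert and subtract the factorizable reference $\otimes_{j=1}^m \overline{P}^{(S_j)}$ inside the logarithm, writing
\[
\ln \frac{P_i(x,y)}{\prod_j Q^{(S_j)}(x^{(S_j)}, y^{(S_j)})} = \ln \frac{P_i(x,y)}{\prod_j \overline{P}^{(S_j)}(x^{(S_j)}, y^{(S_j)})} + \sum_{j=1}^m \ln \frac{\overline{P}^{(S_j)}(x^{(S_j)}, y^{(S_j)})}{Q^{(S_j)}(x^{(S_j)}, y^{(S_j)})}.
\]
Integrating this decomposition against $\pi(x)P_i(x,y)$ and averaging in $i$ immediately reproduces the first summand $\sum_i w_i \D(P_i \| \otimes_j \overline{P}^{(S_j)})$ on the right-hand side of \eqref{eq:pyth_KL_m}, and the task reduces to identifying the resulting cross term with $\sum_{j=1}^m D_\mathrm{KL}^{\pi^{(S_j)}}(\overline{P}^{(S_j)} \| Q^{(S_j)})$.

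For that cross term, the key step is marginalization inside each summand indexed by $j$. Since the $j$-th log depends on $(x,y)$ only through $(x^{(S_j)}, y^{(S_j)})$, the $(-S_j)$-coordinates can be summed out first, and the definition of the keep-$S_j$-in matrix gives
\[
\sum_{x^{(-S_j)}, y^{(-S_j)}} \pi(x) P_i(x,y) = \pi^{(S_j)}(x^{(S_j)})\, P_i^{(S_j)}(x^{(S_j)}, y^{(S_j)}).
\]
Averaging in $i$ and using the linearity identity $\sum_i w_i P_i^{(S_j)} = \overline{P}^{(S_j)}$ from the preliminary section then collapses the sum to exactly $D_\mathrm{KL}^{\pi^{(S_j)}}(\overline{P}^{(S_j)} \| Q^{(S_j)})$. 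Summing over $j$ yields \eqref{eq:pyth_KL_m}.

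The main mild obstacle I expect is bookkeeping the convention $0 \ln 0 = 0$ and absolute-continuity: on entries where $\overline{P}^{(S_j)}$ vanishes, every $P_i^{(S_j)}$ with $w_i > 0$ must also vanish, so such entries contribute nothing to either side and can be discarded by the stated conventions. This is routine and does not affect the algebraic structure above.

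The minimization statement then follows with no extra work. The first summand on the right-hand side of \eqref{eq:pyth_KL_m} is independent of $Q$, while the second is a nonnegative sum of $\pi^{(S_j)}$-weighted KL divergences vanishing exactly when $Q^{(S_j)} = \overline{P}^{(S_j)}$ for every $j$. Hence the minimum over factorizable $Q$ is attained at $Q = \otimes_{j=1}^m \overline{P}^{(S_j)}$ with value $\sum_i w_i \D(P_i \| \otimes_{j=1}^m \overline{P}^{(S_j)})$, as claimed.
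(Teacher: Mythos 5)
Your proposal is correct and follows essentially the same route as the paper's proof: split the log ratio by inserting $\otimes_{j=1}^m \overline{P}^{(S_j)}$, marginalize the cross term over the $(-S_j)$-coordinates via the definition of the keep-$S_j$-in matrix, and use the commutation $\sum_i w_i P_i^{(S_j)} = \overline{P}^{(S_j)}$ to identify the result with $\sum_j D_\mathrm{KL}^{\pi^{(S_j)}}(\overline{P}^{(S_j)} \| Q^{(S_j)})$. Your write-up in fact spells out the marginalization identity and the $0\ln 0$ bookkeeping more explicitly than the paper does.
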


\begin{proof}
    Inspired by Theorem 2.22 of \cite{choi2024geometry}, we note that
    \begin{align*}
        &\quad \sum_{i=1}^n w_i \D (P_i \| \otimes_{j=1}^m Q^{(S_j)}) \\
        &= \sum_{i=1}^n w_i \D (P_i \| \otimes_{j=1}^m \overline{P}^{(S_j)}) + \sum_{i=1}^n w_i \sum_{x, y} \pi(x) P_i(x, y) \ln{\frac{\otimes_{j=1}^m \overline{P}^{(S_j)}(x, y)}{\otimes_{j=1}^m Q^{(S_j)} (x, y)}} \\
        &= \sum_{i=1}^n w_i \D (P_i \| \otimes_{j=1}^m \overline{P}^{(S_j)}) + \sum_{j=1}^m \sum_{i=1}^n w_i \sum_{x^{(S_j)}, y^{(S_j)}} \pi^{(S_j)} (x^{(S_j)})P_i^{(S_j)} (x^{(S_j)}, y^{(S_j)}) \ln{\frac{\overline{P}^{(S_j)}(x^{(S_j)}, y^{(S_j)})}{Q^{(S_j)}(x^{(S_j)}, y^{(S_j)})}} \\
        &= \sum_{i=1}^n w_i \D (P_i \| \otimes_{j=1}^m \overline{P}^{(S_j)}) + \sum_{j=1}^m D_\mathrm{KL}^{\pi^{(S_j)}} (\overline{P}^{(S_j)} \| Q^{(S_j)}),
    \end{align*}
    where the last equality comes from the fact that the averaging and projection operation commutes. As a consequence, since \begin{align*}
        D_\mathrm{KL}^{\pi^{(S_j)}} (\overline{P}^{(S_j)} \| Q^{(S_j)}) \geq 0,
    \end{align*}
    we therefore see that
    \begin{align*}
        \min_{Q;~ Q = \otimes_{j=1}^m Q^{(S_j)}} \sum_{i=1}^n w_i \D(P_i \| Q) = \sum_{i=1}^n w_i \D (P_i \| \otimes_{j=1}^m \overline{P}^{(S_j)}).
    \end{align*}
\end{proof}

As a corollary, in the special case of $m = 2$ with $S_1 = S$, $S_2 = \llbracket d \rrbracket \backslash S$, we see that
\begin{corollary}\label{lem:pyth_KL}
    For given $\mathbf{w} \in \mathcal{S}_n$, $\pi \in \mathcal{P}(\mathcal{X})$, $P_i, Q \in \mathcal{L}(\mathcal{X})$ for $i \in \llbracket n \rrbracket$ where $P_i$ are all $\pi$-stationary, $S \in 2^{\llbracket d \rrbracket}$, the following identity holds:
    \begin{align}\label{eq:pyth_KL}
        \sum_{i=1}^n w_i \D(P_i \| Q^{(S)} \otimes Q^{(-S)}) = \sum_{i=1}^n w_i \D (P_i \| \overline{P}^{(S)} \otimes \overline{P}^{(-S)}) + D_\mathrm{KL}^{\pi^{(S)}} (\overline{P}^{(S)} \| Q^{(S)}) + D_\mathrm{KL}^{\pi^{(-S)}} (\overline{P}^{(-S)} \| Q^{(-S)}).
    \end{align}
    In particular, we have the following minimization result: \begin{align*}
        \min_{Q;~ Q = Q^{(S)} \otimes Q^{(-S)}} \sum_{i=1}^n w_i \D (P_i \| Q) = \sum_{i=1}^n w_i \D (P_i \| \overline{P}^{(S)} \otimes \overline{P}^{(-S)}).
    \end{align*}
\end{corollary}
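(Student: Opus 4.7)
The corollary is the $m=2$ specialization of Lemma \ref{lem:pyth_KL_m}, so the plan is essentially a verification rather than a fresh argument. First I would check that with $S_1 := S$ and $S_2 := \llbracket d \rrbracket \setminus S$, the collection $\{S_1, S_2\}$ forms a disjoint partition of $\llbracket d \rrbracket$, and therefore Lemma \ref{lem:pyth_KL_m} applies. Substituting directly into the identity \eqref{eq:pyth_KL_m} yields
\[
\sum_{i=1}^n w_i \D(P_i \| Q^{(S)} \otimes Q^{(-S)}) = \sum_{i=1}^n w_i \D(P_i \| \overline{P}^{(S)} \otimes \overline{P}^{(-S)}) + D_\mathrm{KL}^{\pi^{(S)}}(\overline{P}^{(S)} \| Q^{(S)}) + D_\mathrm{KL}^{\pi^{(-S)}}(\overline{P}^{(-S)} \| Q^{(-S)}),
\]
which is precisely \eqref{eq:pyth_KL}. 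This deals with the first claim.

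For the minimization statement, I would exploit the structure of the decomposition just obtained. The term $\sum_{i=1}^n w_i \D(P_i \| \overline{P}^{(S)} \otimes \overline{P}^{(-S)})$ does not depend on $Q$, so minimizing the left-hand side over factorizable $Q = Q^{(S)} \otimes Q^{(-S)}$ reduces to minimizing the two remaining summands $D_\mathrm{KL}^{\pi^{(S)}}(\overline{P}^{(S)} \| Q^{(S)})$ and $D_\mathrm{KL}^{\pi^{(-S)}}(\overline{P}^{(-S)} \| Q^{(-S)})$ separately. By Gibbs' inequality applied at the level of $\pi^{(S)}$-stationary Markov chains on $\mathcal{X}^{(S)}$ (and likewise on $\mathcal{X}^{(-S)}$), each such KL divergence is nonnegative and vanishes precisely when $Q^{(S)} = \overline{P}^{(S)}$ and $Q^{(-S)} = \overline{P}^{(-S)}$. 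The minimizer $Q = \overline{P}^{(S)} \otimes \overline{P}^{(-S)}$ is a genuine element of $\mathcal{L}(\mathcal{X})$ since each factor is a transition matrix on the respective coordinate space, which validates the attainment.

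There is no real obstacle here; the only minor item to record carefully is that the two factor-level Gibbs inequalities require the marginal stationary distributions $\pi^{(S)}$ and $\pi^{(-S)}$ to lie in $\mathcal{P}(\mathcal{X}^{(S)})$ and $\mathcal{P}(\mathcal{X}^{(-S)})$ respectively, which is immediate from $\pi \in \mathcal{P}(\mathcal{X})$ and the marginalization formula used in defining $P^{(S)}$ and $P^{(-S)}$ in Section \ref{sec:prelim}. With this in hand, both assertions of the corollary follow.
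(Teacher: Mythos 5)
Your proposal is correct and matches the paper's approach exactly: the paper obtains this corollary by specializing Lemma \ref{lem:pyth_KL_m} to $m=2$ with $S_1=S$ and $S_2=\llbracket d\rrbracket\setminus S$, and the minimization claim follows from the nonnegativity of the two residual KL terms, just as you argue. No further comment is needed.
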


\subsection{Definition and examples of submodularity}
We first recall the definition of a submodular function (Section 14 of \cite{korte2008combinatorial}) and its generalization to $k$-submodularity. Given a finite nonempty ground set \( U \) as the fixed base domain, i.e. $U = \llbracket d \rrbracket$ in the context of this paper, a set function \( f : 2^U \to \mathbb{R} \)  defined on subsets of \( U \) is called \textbf{submodular} if for all \( S, T \subseteq U \),\begin{align*}
    f(S) + f(T) \geq f(S \cap T) + f(S \cup T).
\end{align*}

A multivariate generalization of submodularity is known as $k$-submodularity \cite{ene2022streaming} where $k \in \mathbb{N}$. Let \( f : (k + 1)^U \to \mathbb{R} \) be a set function. The function \( f \) is said to be \( k \)-\textbf{submodular} if  
\[
f(\mathbf{S}) + f(\mathbf{T}) \geq f(\mathbf{S} \sqcap \mathbf{T}) + f(\mathbf{S} \sqcup \mathbf{T}) \quad \forall \, \mathbf{S}, \mathbf{T} \in (k + 1)^U,
\]
where \(\mathbf{S} \sqcap \mathbf{T}\) is the \( k \)-tuple whose \( i \)-th set is \( S_i \cap T_i \) and \(\mathbf{S} \sqcup \mathbf{T}\) is the \( k \)-tuple whose \( i \)-th set is \((S_i \cup T_i) \setminus \left(\bigcup_{j \neq i} (S_j \cup T_j)\right)\). In particular, when $k=1$, an $1$-submodular function is equivalent to a submodular function.

We proceed to recall the definition of orthant submodularity \cite{ene2022streaming}.
For $\mathbf{S} = (S_1, \ldots, S_k),\mathbf T = (T_1, \ldots, T_k) \in (k+1)^U$, let \(\Delta_{e, i} f(\mathbf{S})\) be the marginal gain of adding \(e\) to the \(i\)-th set of \(\mathbf{S}\):
\begin{align*}
    \Delta_{e, i} f(\mathbf{S}) := f(S_1, \ldots, S_i \cup \{e\}, \ldots, S_k) - f(S_1, \ldots, S_i, \ldots, S_k).
\end{align*}
A function \( f \) is said to be \textbf{orthant submodular} if
\begin{align*}
    \Delta_{e,i}f(\mathbf S) \geq \Delta_{e,i}f(\mathbf T)
\end{align*}
for all \( i \in \llbracket k\rrbracket \) and \( \mathbf S, \mathbf T \in (k+1)^U \) such that \( \mathbf S \preceq \mathbf T \), \( e \notin \mathrm{supp}(\mathbf T) \), where $\mathbf{S} \preceq \mathbf{T}$ if and only if $S_i \subseteq T_i$, $\forall i \in \llbracket k \rrbracket$.

We then show some examples of submodular structures that arise in the information theory of Markov chains.

\begin{theorem}[Submodularity of some information-theoretic functions in Markov chain theory]\label{thm:submod_mc}
Let $\mathbf{w} \in \mathcal{S}_n$, \( S \subseteq \llbracket d\rrbracket \), \( P, P_i \in \mathcal{L}(\mathcal{X}) \) be  $\pi$-stationary transition matrices for $i \in \llbracket n \rrbracket$. We have
\begin{enumerate}
    \item\label{it:submod_ent} (Submodularity of the entropy rate of \( P \)) The mapping \( S \mapsto H(P^{(S)}) \) is submodular.
    \item\label{it:submod_KL} (Submodularity of the distance to \( (S, \llbracket d \rrbracket \backslash S) \)-factorizability of \( P \)) The mapping \( S \mapsto \D(P\|P^{(S)} \otimes P^{(-S)}) \) is submodular.
    \item\label{it:submod_ent_w} (Submodularity of the entropy rate of $\overline{P}$) The mapping $S \mapsto H(\overline{P}^{(S)})$ is submodular.
    \item\label{it:submod_KL_w} (Submodularity of the weighted distance to \( (S, \llbracket d \rrbracket \backslash S) \)-factorizability of $\mathcal{B}$) The mapping $S \mapsto \sum_{i=1}^n w_i \D (P_i \| \overline{P}^{(S)} \otimes \overline{P}^{(-S)})$ is submodular.
\end{enumerate}
\end{theorem}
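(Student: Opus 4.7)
The plan is to observe that all four statements ultimately reduce to the entropy-rate submodularity in Item \ref{it:submod_ent}, and to derive the other three via short algebraic manipulations. I would first establish Item \ref{it:submod_ent}, interpreting $H(P^{(S)})$ as the conditional entropy $H(X_1^{(S)} \mid X_0^{(S)})$ under the stationary joint $(X_0, X_1) \sim \pi(x) P(x, y)$, and invoking a Han-type chain-rule argument in combination with $\pi$-stationarity (which gives $X_0 \sim X_1 \sim \pi$); alternatively, the statement can be cited from the corresponding result in \cite{choi2024geometry}.

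For Item \ref{it:submod_KL}, I would expand the KL divergence and push the logarithm of $P^{(S)} \otimes P^{(-S)}$ onto the marginals. Using $-\sum_{x, y} \pi(x) P(x, y) \ln P^{(S)}(x^{(S)}, y^{(S)}) = H(P^{(S)})$ (by marginalizing pairs $(x^{(-S)}, y^{(-S)})$), this yields the identity
\begin{align*}
    \D(P \| P^{(S)} \otimes P^{(-S)}) = H(P^{(S)}) + H(P^{(-S)}) - H(P).
\end{align*}
The term $-H(P)$ is a constant in $S$, $S \mapsto H(P^{(S)})$ is submodular by Item \ref{it:submod_ent}, and $S \mapsto H(P^{(-S)})$ is submodular since submodularity is preserved under complementation; hence their sum is submodular.

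For Item \ref{it:submod_ent_w}, I would apply Item \ref{it:submod_ent} directly to $P = \overline{P}(\mathbf{w})$, which is itself $\pi$-stationary (as shown in the preliminaries). For Item \ref{it:submod_KL_w}, I would exploit linearity of the cross-entropy term in $P$ to derive, for any fixed $Q \in \mathcal{L}(\mathcal{X})$,
\begin{align*}
    \sum_{i=1}^n w_i \D(P_i \| Q) = \D(\overline{P}(\mathbf{w}) \| Q) + \left[ H(\overline{P}(\mathbf{w})) - \sum_{i=1}^n w_i H(P_i) \right].
\end{align*}
Specializing to $Q = \overline{P}^{(S)} \otimes \overline{P}^{(-S)}$, the bracketed term does not depend on $S$, so the submodularity of the left-hand side in $S$ reduces to that of $S \mapsto \D(\overline{P}(\mathbf{w}) \| \overline{P}^{(S)} \otimes \overline{P}^{(-S)})$, which in turn follows from Item \ref{it:submod_KL} applied to $\overline{P}(\mathbf{w})$.

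The main obstacle is Item \ref{it:submod_ent}. The naive decomposition $H(P^{(S)}) = H(X_0^{(S)}, X_1^{(S)}) - H(X_0^{(S)})$ expresses the entropy rate as a difference of two submodular functions of $S$, which is not automatically submodular; the proof therefore requires genuine use of $\pi$-stationarity—or a pointer to the precursor result in \cite{choi2024geometry}—rather than only the standard Shannon-entropy submodularity applied to the joint stationary distribution.
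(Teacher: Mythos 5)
Your proposal is correct and follows essentially the same route as the paper: items \eqref{it:submod_ent} and \eqref{it:submod_KL} rest on the precursor result in \cite{choi2024geometry}, item \eqref{it:submod_ent_w} is item \eqref{it:submod_ent} applied to the $\pi$-stationary average $\overline{P}(\mathbf{w})$, and item \eqref{it:submod_KL_w} reduces to entropy-rate submodularity plus closure under complementation. Your compensation identity $\sum_i w_i \D(P_i\|Q) = \D(\overline{P}(\mathbf{w})\|Q) + H(\overline{P}(\mathbf{w})) - \sum_i w_i H(P_i)$ combined with item \eqref{it:submod_KL} is algebraically the same as the paper's direct expansion $\sum_i w_i \D(P_i\|\overline{P}^{(S)}\otimes\overline{P}^{(-S)}) = H(\overline{P}^{(S)}) + H(\overline{P}^{(-S)}) - \sum_i w_i H(P_i)$, and your closing caveat about the naive difference-of-submodular decomposition of $H(P^{(S)})$ is a fair and accurate observation.
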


\begin{proof}
    From Proposition 2.33 of \cite{choi2024geometry}, item \eqref{it:submod_ent} and item \eqref{it:submod_KL} hold. Since the map $S \mapsto H(P^{(S)})$ is submodular, the map $S \mapsto H(\overline{P}^{(S)})$ is submodular since $\overline{P}^{(S)}$ is the projection of $\overline{P}$ onto subset $S$, which proves item~\eqref{it:submod_ent_w}. Since \begin{align*}
        \sum_{i=1}^n w_i \D (P_i \| \overline{P}^{(S)} \otimes \overline{P}^{(-S)}) = H(\overline{P}^{(S)}) + H(\overline{P}^{(-S)}) - \sum_{i=1}^n w_i H(P_i),
    \end{align*}
    we can conclude that $S \mapsto \sum_{i=1}^n w_i \D (P_i \| \overline{P}^{(S)} \otimes \overline{P}^{(-S)})$ is submodular because both the map $S \mapsto H(\overline{P}^{(S)})$ and the map $S \mapsto H(\overline{P}^{(-S)})$ are submodular (by Lemma 2.1 of \cite{lai2025information}).
\end{proof}

\section{The minimax optimization problem}\label{sec:minimax}
We denote a feasible set $\mathcal{F}$, the set of factorizable transition matrices with respect to a partition $\mathbf{S}$: 
\begin{align*}
    \mathcal{F} = \mathcal{F}(\mathbf{S}) := \{Q \in \mathcal{L}(\mathcal{X});~ \mathbf{S} = (S_1, \ldots, S_m) \in (m+1)^{\llbracket d \rrbracket},~ Q = Q^{(S_1)}\otimes \ldots \otimes Q^{(S_m)}\}.
\end{align*}

We are interested in the following minimax optimization problem 
\begin{align}\label{eq:minmax_KL}
    \min_{Q \in \mathcal{F}} \max_{P\in \mathcal{B}} \D(P\|Q).
\end{align}
In words, we seek to find an optimal factorizable $Q \in \mathcal{F}$ that minimize the worst-case information loss in approximating members of $\mathcal{B}$.

Since $\mathcal{F}$ is not a convex set, we denote $$\mathcal{M} := \{M \in \mathbb{R}^{|\mathcal{X}| \times |\mathcal{X}|}\}$$ as the set of matrices on the state space $\mathcal{X}$ and study the weighted geometric mean and the following set: 
\begin{align*}
    \mathcal{A} := \left\{A \in \mathcal{M};~ \exists \, l \in \mathbb{N}, \mathbf{c} \in \mathcal{S}_l \text{ s.t. } A(x, y) = \sum_{i=1}^l c_i \log Q_i (x, y),~ \forall x, y;~ Q_i \in \mathcal{F},~ \forall i \in \llbracket l \rrbracket \right\}.
\end{align*}

\begin{lemma}
    The set $\mathcal{A}$ is convex.
\end{lemma}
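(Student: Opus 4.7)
The plan is to verify convexity directly from the definition by taking two arbitrary elements of $\mathcal{A}$ and showing that any convex combination of them still fits the defining template, simply by concatenating the two underlying convex decompositions.

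More concretely, suppose $A, B \in \mathcal{A}$ and $\lambda \in [0,1]$. By definition, I can write
\begin{align*}
A(x,y) = \sum_{i=1}^{l_1} c_i \log Q_i(x,y), \qquad B(x,y) = \sum_{j=1}^{l_2} d_j \log R_j(x,y),
\end{align*}
with $\mathbf{c} \in \mathcal{S}_{l_1}$, $\mathbf{d} \in \mathcal{S}_{l_2}$, and $Q_i, R_j \in \mathcal{F}$. Then I form the convex combination
\begin{align*}
(\lambda A + (1-\lambda) B)(x,y) = \sum_{i=1}^{l_1} (\lambda c_i) \log Q_i(x,y) + \sum_{j=1}^{l_2} ((1-\lambda) d_j) \log R_j(x,y),
\end{align*}
and observe that the concatenated coefficient vector $(\lambda c_1, \ldots, \lambda c_{l_1}, (1-\lambda)d_1, \ldots, (1-\lambda) d_{l_2})$ lies in $\mathcal{S}_{l_1 + l_2}$: each entry is nonnegative and the sum equals $\lambda \cdot 1 + (1-\lambda)\cdot 1 = 1$. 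Since the indexed family $(Q_1,\ldots,Q_{l_1},R_1,\ldots,R_{l_2})$ consists of elements of $\mathcal{F}$, this exhibits $\lambda A + (1-\lambda) B$ in the required form with $l = l_1 + l_2$, hence $\lambda A + (1-\lambda) B \in \mathcal{A}$.

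There is essentially no obstacle here: the set $\mathcal{A}$ is, by construction, the convex hull (inside $\mathcal{M}$) of the image of $\mathcal{F}$ under the entrywise $\log$ map, so convexity is automatic. The only mild subtlety I would flag in writing it up is to be explicit that the number $l$ in the definition is not fixed (one takes a union over $l \in \mathbb{N}$), which is exactly what allows the concatenation trick to keep the new representative inside $\mathcal{A}$; and to point out that the entries $\log Q_i(x,y)$ are well-defined in the extended reals under the convention inherited from the KL setup, so no additional domain issues arise.
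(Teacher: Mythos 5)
Your proof is correct and follows essentially the same route as the paper's: both take two representatives, form the convex combination, and observe that the concatenated coefficient vector $(\lambda \mathbf{c}, (1-\lambda)\mathbf{d})$ lies in $\mathcal{S}_{l_1+l_2}$. Your write-up is in fact slightly more careful than the paper's, which leaves the verification that the new coefficients sum to one implicit.
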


\begin{proof}
    We choose $A, B \in \mathcal{A}$ such that there exists $\mathbf{c} \in \mathcal{S}_l$, $\mathbf{d} \in \mathcal{S}_k$, $Q_i,R_j \in \mathcal{F}$ for $i \in \llbracket l \rrbracket, j \in \llbracket k \rrbracket$ and for all $x,y$ we have
    \begin{align*}
        A(x, y) = \sum_{i=1}^l c_i \log Q_i (x, y),~ B(x, y) = \sum_{i=1}^k d_i \log R_i (x, y).
    \end{align*}
    We choose $\alpha \in [0, 1]$ and calculate that
    \begin{align*}
        \quad \alpha A(x, y) + (1 - \alpha) B(x, y) = \sum_{i=1}^l \alpha c_i \log Q_i (x, y) + \sum_{i=1}^k (1 - \alpha) d_i \log R_i (x, y).
    \end{align*}
    We thus conclude that $\alpha A + (1 - \alpha) B \in \mathcal{A}$, and hence $\mathcal{A}$ is convex.
\end{proof}
We define the \textbf{elementwise exponential} of a matrix $M \in \mathcal{M}$ to be $\exp{M}$, that is, for all $x,y \in \mathcal{X}$,
\begin{align*}
    \exp{M} (x,y) := e^{M(x,y)}.
\end{align*}
For given $P \in \mathcal{L}(\mathcal{X})$, we define the \textbf{generalized KL divergence} from the non-negative and not necessarily stochastic matrix $\exp{A}$ to $P$ to be
\begin{align*}
    \widetilde{D}_\mathrm{KL}^\pi (P \| A) &:= \sum_{x, y} \pi(x) P(x, y) \log\frac{P(x, y)}{\exp{A}(x, y)}\\
    &= \sum_{x, y} \pi(x) P(x, y) \log P(x, y) - \sum_{x, y} \pi(x) P(x, y) A(x, y),
\end{align*}
which is linear in $A$, hence the map $\mathcal{A} \ni A \mapsto \widetilde{D}_\mathrm{KL}^\pi (P \| A)$ is convex.

We study the following minimax optimization problem \begin{align}\label{eq:minimax_KL_gen}
    \min_{A \in \mathcal{A}} \max_{P \in \mathcal{B}} \widetilde{D}_\mathrm{KL}^\pi (P \| A).
\end{align}
By \cite[equation (3)]{candan2020chebyshev}, we can reformulate problem \eqref{eq:minimax_KL_gen} as \begin{align}\label{eq:minimax_KL_gen_rev}
    & \min_{A \in \mathcal{A},~ r} r\\
    & \text{s.t. } \quad \widetilde{D}_\mathrm{KL}^\pi (P_i \| A) \leq r,~ \forall i\in \llbracket n \rrbracket, \nonumber
\end{align}
which is a constrained convex minimization problem.

Comparing problem \eqref{eq:minmax_KL} with problem \eqref{eq:minimax_KL_gen}, we note that for every $Q \in \mathcal{F}$, we can define an associated $A \in \mathcal{A}$ such that $A(x,y) = \log Q(x,y)$, and hence we have the following inequality: 
\begin{align}\label{eq:KL_gen_ineq}
    \min_{Q \in \mathcal{F}} \max_{P \in \mathcal{B}} \D (P \| Q) \geq \min_{A \in \mathcal{A}} \max_{P \in \mathcal{B}} \widetilde{D}_\mathrm{KL}^\pi (P \| A).
\end{align}

Suppose $A\in \mathcal{A}$ such that $\exp A(x, y) = \prod_{i=1}^l Q_i(x, y)^{c_i}$ for any $x, y$, we then show a Pythagorean identity based on the proof of Theorem 2.22 of \cite{choi2024geometry}:
\begin{align}\label{eq:pyth_gen_KL}
    \widetilde{D}_\mathrm{KL}^\pi (P \| A) &= \sum_{x, y} \pi(x) P(x, y) \log{\frac{P(x, y)}{\prod_{i=1}^l Q_i(x, y)^{c_i}}}\nonumber \\
    &= \D (P \| \otimes_{i=1}^m P^{(S_i)}) + \sum_{x, y} \pi(x) P(x, y) \log{\frac{\otimes_{i=1}^m P^{(S_i)}(x, y)}{\prod_{j=1}^l Q_j(x, y)^{c_j}}} \nonumber \\
    &= \D (P \| \otimes_{i=1}^m P^{(S_i)}) + \sum_{i=1}^m \sum_{j=1}^l c_j \D (P^{(S_i)} \| Q_j^{(S_i)}) \geq \widetilde{D}_\mathrm{KL}^\pi (P \| A^*),
\end{align}
where $A^* = A^*(S_1,\ldots,S_m,P) \in \mathcal{A}$ is defined to be 
\begin{align*}
    A^* (x, y) := \log(\otimes_{i=1}^m P^{(S_i)} (x, y)).
\end{align*}

Inspired by \eqref{eq:pyth_gen_KL} and Lemma~\ref{lem:pyth_KL_m}, for given $\mathbf{w} \in \mathcal{S}_n$, we show a weighted version of Pythagorean identity for generalized KL divergence: 
\begin{align}\label{eq:pyth_gen_KL_m}
    \sum_{i=1}^n w_i \widetilde{D}_\mathrm{KL}^\pi (P_i \| A) &= \sum_{i=1}^n w_i \sum_{x, y} \pi(x) P_i(x, y) \log{\frac{P_i(x, y)}{\prod_{k=1}^l Q_k(x, y)^{c_k}}} \nonumber \\
    &= \sum_{i=1}^n w_i \D (P_i \| \otimes_{j=1}^m \overline{P}^{(S_j)}) + \sum_{i=1}^n w_i \sum_{x, y} \pi(x) P_i(x, y) \log{\frac{\otimes_{j=1}^m \overline{P}^{(S_j)} (x, y)}{\prod_{k=1}^l Q_k (x, y)^{c_k}}} \nonumber \\
    &= \sum_{i=1}^n w_i \D (P_i \| \otimes_{j=1}^m \overline{P}^{(S_j)}) + \sum_{j=1}^m \sum_{k=1}^l c_k D_\mathrm{KL}^{\pi^{(S_j)}} (\overline{P}^{(S_j)} \| Q_k^{(S_j)}) \\
    &\geq \sum_{i=1}^n w_i \widetilde{D}_\mathrm{KL}^\pi (P_i \| A^*_n (\mathbf{w})), \nonumber
\end{align}
where $A^*_n (\mathbf{w}) = A^*_n(\mathbf{w},S_1,\ldots,S_m,\mathcal{B}) \in \mathcal{A}$ is defined to be, for all $x,y \in \mathcal{X}$,
\begin{align*}
    A^*_n (x, y) := \log(\otimes_{j=1}^m \overline{P}^{(S_j)}) (x, y).
\end{align*}
In the special case that $n = 1$, we recover that $A^*_1 = A^*$.

For the problem \eqref{eq:minimax_KL_gen_rev}, we denote the Lagrangian $L: \mathbb{R}_+ \times \mathcal{A} \times \mathbb{R}_+^n$ to be 
\begin{align}\label{eq:LagrangeL}
    L(r, A, \mathbf{w}) := r + \sum_{i=1}^n w_i (\widetilde{D}_\mathrm{KL}^\pi (P_i \| A) - r),
\end{align}
where $\mathbf{w}$ is the associated Lagrangian multiplier.

From the Pythagorean identity \eqref{eq:pyth_gen_KL_m}, the dual problem of \eqref{eq:minimax_KL_gen_rev} can be written as \begin{align}\label{eq:minimax_KL_gen_dual}
    \max_{\mathbf{w} \in \mathbb{R}_+^n} \min_{r \geq 0,~ A \in \mathcal{A}} L(r, A, \mathbf{w}) &= \max_{\mathbf{w} \in \mathcal{S}_n} \min_{A \in \mathcal{A}} \sum_{i=1}^n w_i \widetilde{D}_\mathrm{KL}^\pi (P_i \| A) = \max_{\mathbf{w} \in \mathcal{S}_n} \sum_{i=1}^n w_i \widetilde{D}_\mathrm{KL}^\pi (P_i \| A^*_n (\mathbf{w})).
\end{align}
The main results in this section are that strong duality holds for problem \eqref{eq:minimax_KL_gen_rev}, and problem \eqref{eq:minmax_KL} and \eqref{eq:minimax_KL_gen} are equivalent. We write the results in the following theorem.

\begin{theorem}\label{thm:equiv}
    \begin{enumerate}
        \item\label{it:sd} The strong duality holds for problem \eqref{eq:minimax_KL_gen_rev} and there exists $\mathbf{w}^* \in \mathcal{S}_n$ such that \begin{align*}
            \min_{A \in \mathcal{A}} \max_{P \in \mathcal{B}} \widetilde{D}_\mathrm{KL}^\pi (P \| A) = \max_{\mathbf{w} \in \mathcal{S}_n} \sum_{i=1}^n w_i \widetilde{D}_\mathrm{KL}^\pi (P_i \| A^*_n(\mathbf{w})) = \sum_{i=1}^n w_i^* \widetilde{D}_\mathrm{KL}^\pi (P_i \| A^*_n (\mathbf{w}^*)).
        \end{align*}
        \item\label{it:cs} Suppose the pair $(A, r) \in \mathcal{A} \times \mathbb{R}_+$ minimizes the primal problem \eqref{eq:minimax_KL_gen_rev} and $\mathbf{w}^* \in \mathcal{S}_n$ maximizes the dual problem \eqref{eq:minimax_KL_gen_dual}, then the following complementary slackness results hold: for $i \in \llbracket n \rrbracket$, we have \begin{align*}
            \widetilde{D}_\mathrm{KL}^\pi (P_i \| A) \begin{cases}
                = r, \quad \text{ if } w_i^* > 0; \\
                \leq r, \quad \text{ if } w_i^* = 0.
            \end{cases}
        \end{align*}
        \item\label{it:equiv} Problems \eqref{eq:minmax_KL} and \eqref{eq:minimax_KL_gen} are equivalent, i.e. 
        \begin{align*}
            \min_{Q \in \mathcal{F}} \max_{P \in \mathcal{B}} \D (P \| Q) = \min_{A \in \mathcal{A}} \max_{P \in \mathcal{B}} \widetilde{D}_\mathrm{KL}^\pi (P \| A).
        \end{align*}
        \item\label{it:opt} The same $\mathbf{w}^* \in \mathcal{S}_n$ from item \eqref{it:sd} satisfies
        \begin{align*}
            \min_{Q \in \mathcal{F}} \max_{P \in \mathcal{B}} \D (P \| Q) = \max_{\mathbf{w} \in \mathcal{S}_n} \sum_{i=1}^n w_i \D (P_i \| \otimes_{k=1}^m \overline{P}(\mathbf{w})^{(S_k)}) = \sum_{i=1}^n w_i^* \D (P_i \| \otimes_{k=1}^m \overline{P}(\mathbf{w}^*)^{(S_k)}).
        \end{align*}

        \item\label{it:L_conc} The map 
        \begin{align*}
            \mathcal{S}_n \ni \mathbf{w} \mapsto \sum_{i=1}^n w_i \D (P_i \| \otimes_{k=1}^m \overline{P}(\mathbf{w})^{(S_k)})
        \end{align*}
        is concave in $\mathbf{w}$.
    \end{enumerate}
\end{theorem}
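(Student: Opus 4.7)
The plan is to treat the five items in the order \eqref{it:sd}, \eqref{it:cs}, \eqref{it:equiv}, \eqref{it:opt}, \eqref{it:L_conc}, with \eqref{it:sd} and \eqref{it:equiv} carrying the real content.

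For \eqref{it:sd} I first recognize \eqref{eq:minimax_KL_gen_rev} as a finite-dimensional convex program: the objective $r$ is linear, the feasible set $\mathcal{A} \times \mathbb{R}_+$ is convex by the preceding lemma, and each constraint $\widetilde{D}_\mathrm{KL}^\pi(P_i \| A) - r \le 0$ is affine in $(A, r)$ because $A \mapsto \widetilde{D}_\mathrm{KL}^\pi(P_i \| A)$ is linear. Slater's condition is verified by taking $A_0 = \log Q_0$ for any $Q_0 \in \mathcal{F}$ with $Q_0(x,y) > 0$ on the supports of all $P_i$ (e.g.\ $Q_0 = \otimes_{j=1}^m \overline{P}(\mathbf{w}_{\mathrm{unif}})^{(S_j)}$ with $\mathbf{w}_{\mathrm{unif}} = (1/n, \ldots, 1/n)$) and choosing $r_0$ strictly above $\max_i \widetilde{D}_\mathrm{KL}^\pi(P_i \| A_0)$. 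Strong duality then equates primal and dual values, and the dual collapses to \eqref{eq:minimax_KL_gen_dual} via the weighted Pythagorean identity \eqref{eq:pyth_gen_KL_m}, which identifies the inner minimum over $A$ with the attained value at $A_n^*(\mathbf{w}) \in \mathcal{A}$. Existence of a maximizer $\mathbf{w}^* \in \mathcal{S}_n$ follows from continuity of the dual objective (a pointwise infimum over $A$ of maps affine in $\mathbf{w}$) and compactness of $\mathcal{S}_n$. Item \eqref{it:cs} then follows from first-order optimality for the concave dual on $\mathcal{S}_n$: its supergradient at $\mathbf{w}^*$ contains the vector $(\widetilde{D}_\mathrm{KL}^\pi(P_i \| A_n^*(\mathbf{w}^*)))_{i=1}^n$, so optimality on the simplex yields a scalar $\lambda$ with $\widetilde{D}_\mathrm{KL}^\pi(P_i \| A_n^*(\mathbf{w}^*)) \le \lambda$ for all $i$ and equality whenever $w_i^* > 0$; dotting with $\mathbf{w}^*$ identifies $\lambda$ with the common primal/dual value, giving the stated dichotomy.

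For \eqref{it:equiv}, one direction is \eqref{eq:KL_gen_ineq}. For the reverse, note that the $A_n^*(\mathbf{w}^*)$ produced in \eqref{it:sd} is literally the logarithm of the stochastic matrix $Q^* := \otimes_{k=1}^m \overline{P}(\mathbf{w}^*)^{(S_k)} \in \mathcal{F}$, so $\widetilde{D}_\mathrm{KL}^\pi(P \| A_n^*(\mathbf{w}^*)) = \D(P \| Q^*)$ for every transition matrix $P$. The complementary slackness of \eqref{it:cs} gives $\max_{P \in \mathcal{B}} \widetilde{D}_\mathrm{KL}^\pi(P \| A_n^*(\mathbf{w}^*)) = \sum_i w_i^* \widetilde{D}_\mathrm{KL}^\pi(P_i \| A_n^*(\mathbf{w}^*))$, which equals the primal value by strong duality. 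Translating to $\D$ and using $Q^* \in \mathcal{F}$,
\[
\min_{Q \in \mathcal{F}} \max_{P \in \mathcal{B}} \D(P \| Q) \;\le\; \max_{P \in \mathcal{B}} \D(P \| Q^*) \;=\; \min_{A \in \mathcal{A}} \max_{P \in \mathcal{B}} \widetilde{D}_\mathrm{KL}^\pi(P \| A),
\]
which together with \eqref{eq:KL_gen_ineq} closes the loop.

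Items \eqref{it:opt} and \eqref{it:L_conc} then fall out quickly. Item \eqref{it:opt} combines \eqref{it:sd} and \eqref{it:equiv} together with $\widetilde{D}_\mathrm{KL}^\pi(P_i \| A_n^*(\mathbf{w})) = \D(P_i \| \otimes_k \overline{P}(\mathbf{w})^{(S_k)})$, and item \eqref{it:L_conc} follows from
\[
\sum_{i=1}^n w_i \D\bigl(P_i \,\big\|\, \otimes_{k=1}^m \overline{P}(\mathbf{w})^{(S_k)}\bigr) \;=\; \min_{A \in \mathcal{A}} \sum_{i=1}^n w_i \widetilde{D}_\mathrm{KL}^\pi(P_i \| A),
\]
a direct reading of \eqref{eq:pyth_gen_KL_m}, which exhibits the target as a pointwise infimum over $A$ of maps affine in $\mathbf{w}$, hence concave. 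The main obstacle, I expect, is \eqref{it:equiv}: a priori the relaxation $\mathcal{A} \supseteq \log \mathcal{F}$ is strict, so tightness has to be certified by exhibiting a primal optimizer of \eqref{eq:minimax_KL_gen_rev} that actually lies in $\log \mathcal{F}$, and that is precisely what the Pythagorean minimizer $A_n^*(\mathbf{w}^*)$ supplies once strong duality is in hand.
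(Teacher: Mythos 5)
Your proposal is correct and follows essentially the same route as the paper: Slater's condition gives strong duality, the weighted Pythagorean identity \eqref{eq:pyth_gen_KL_m} collapses the dual to a maximization over $\mathcal{S}_n$, the equalizer property at $\mathbf{w}^*$ certifies that the relaxed optimum is attained at $A_n^*(\mathbf{w}^*)=\log Q^*$ with $Q^*\in\mathcal{F}$ (closing the gap in \eqref{eq:KL_gen_ineq}), and concavity follows from writing the objective as an infimum of functions linear in $\mathbf{w}$. The only cosmetic difference is that you derive the equalizer/complementary-slackness dichotomy from first-order supergradient optimality on the simplex, whereas the paper obtains it by a contradiction argument from the KKT complementary slackness conditions; both are the same machinery, and your version of item \eqref{it:sd} is if anything slightly more careful about strict feasibility and existence of the maximizer.
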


\begin{proof}
We first show item \eqref{it:sd}, i.e., strong duality holds for problem \eqref{eq:minimax_KL_gen_rev}. We shall show that the Slater's qualification is verified (see Section 5.2.3 of \cite{boyd2004convex} and Appendix A of \cite{beck2017first}), which requires that the constraints in \eqref{eq:minimax_KL_gen_rev} are strictly feasible. We take any $A$ and \begin{align*}
    r = \max_{i \in \llbracket n \rrbracket} \widetilde{D}_\mathrm{KL}^\pi (P_i \| A) + 1 > \widetilde{D}_\mathrm{KL}^\pi (P_l \| A),~ \forall l\in \llbracket n \rrbracket,
\end{align*}
hence the strong duality holds. Therefore we have \begin{align*}
    \min_{A \in \mathcal{A}} \max_{P \in \mathcal{B}} \widetilde{D}_\mathrm{KL}^\pi (P \| A) = \max_{\mathbf{w} \in \mathcal{S}_n} \sum_{i=1}^n w_i \widetilde{D}_\mathrm{KL}^\pi (P_i \| A^*_n(\mathbf{w})) = \sum_{i=1}^n w_i^* \widetilde{D}_\mathrm{KL}^\pi (P_i \| A_n^* (\mathbf{w}^*)).
\end{align*}
As the strong duality in item \eqref{it:sd} holds, by Section 5.5.2 of \cite{boyd2004convex}, the \emph{complementary slackness} condition holds, i.e. \begin{align*}
    w_i^* (\widetilde{D}_\mathrm{KL}^\pi (P_i \| A) - r) = 0,
\end{align*}
which is equivalent to 
    \begin{align*}
        \widetilde{D}_\mathrm{KL}^\pi (P_i \| A) \begin{cases}
            = r, \quad \text{ if } w_i^* > 0; \\
            \leq r, \quad \text{ if } w_i^* = 0,
        \end{cases}
    \end{align*}
for all $i \in \llbracket n \rrbracket$, hence it proves item \eqref{it:cs}.

We proceed to prove item \eqref{it:equiv}. Let $j \in \llbracket n \rrbracket$ be an index where $w^*_j > 0$, we want to show
\begin{align*}
    \widetilde{D}_\mathrm{KL}^\pi (P_j \| A^*_n (\mathbf{w}^*)) = \max_{l\in \llbracket n \rrbracket} \widetilde{D}_\mathrm{KL}^\pi (P_l \| A^*_n(\mathbf{w}^*)).
\end{align*}
As it is clear to see that $\widetilde{D}_\mathrm{KL}^\pi (P_j \| A^*_n (\mathbf{w}^*)) \leq \max_{l\in \llbracket n \rrbracket} \widetilde{D}_\mathrm{KL}^\pi (P_l \| A^*_n(\mathbf{w}^*))$, we then assume that
\begin{align*}
    \widetilde{D}_\mathrm{KL}^\pi (P_j \| A^*_n (\mathbf{w}^*)) < \max_{l\in \llbracket n \rrbracket} \widetilde{D}_\mathrm{KL}^\pi (P_l \| A^*_n(\mathbf{w}^*)).
\end{align*}
That is, there exists an index $l^*$ such that \begin{align*}
    \widetilde{D}^\pi_\mathrm{KL}(P_j \| A^*_n(\mathbf{w}^*)) < \widetilde{D}^\pi_\mathrm{KL}(P_{l^*} \| A^*_n(\mathbf{w}^*)).
\end{align*}
By strong duality, we have $w^*_{l^*} = 0$, then by complementary slackness in item \eqref{it:cs}, we have \begin{align*}
    \widetilde{D}_\mathrm{KL}^\pi (P_{l^*} \| A^*_n (\mathbf{w}^*)) \leq r = \widetilde{D}_\mathrm{KL}^\pi (P_j \| A^*_n (\mathbf{w}^*)) < \widetilde{D}_\mathrm{KL}^\pi (P_{l^*} \| A^*_n (\mathbf{w}^*)),
\end{align*}
which leads to a contradiction. It therefore yields \begin{align*}
    \widetilde{D}_\mathrm{KL}^\pi (P_j \| A^*_n (\mathbf{w}^*)) = \max_{l\in \llbracket n \rrbracket} \widetilde{D}_\mathrm{KL}^\pi (P_l \| A^*_n(\mathbf{w}^*)).
\end{align*}
By recalling the definition of generalized KL divergence and \eqref{eq:KL_gen_ineq}, we have
\begin{align*}
    \min_{Q \in \mathcal{F}} \max_{P \in \mathcal{B}} \D (P \| Q) &\geq \min_{A \in \mathcal{A}} \max_{P \in \mathcal{B}} \widetilde{D}_\mathrm{KL}^\pi (P \| A) = \max_{\mathbf{w} \in \mathcal{S}_n} \sum_{i=1}^n w_i \widetilde{D}_\mathrm{KL}^\pi (P_i \| A^*_n (\mathbf{w})) \\
    &= \max_{l\in \llbracket n \rrbracket} \widetilde{D}_\mathrm{KL}^\pi (P_l \| A^*_n(\mathbf{w}^*)) = \widetilde{D}_\mathrm{KL}^\pi (P_j \| A^*_n(\mathbf{w}^*)) \\
    &= \max_{P \in \mathcal{B}} \D (P \| \otimes_{k=1}^m \overline{P}(\mathbf{w}^*)^{(S_k)}) \geq \min_{Q \in \mathcal{F}} \max_{P \in \mathcal{B}} \D (P \| Q),
\end{align*}
therefore we obtain \begin{align*}
    \min_{Q \in \mathcal{F}} \max_{P \in \mathcal{B}} \D (P \| Q) = \min_{A \in \mathcal{A}} \max_{P \in \mathcal{B}} \widetilde{D}_\mathrm{KL}^\pi (P \| A),
\end{align*}
hence problem \eqref{eq:minmax_KL} and problem \eqref{eq:minimax_KL_gen} are equivalent. Therefore, for the $\mathbf{w}^* \in \mathcal{S}_n$ in item \eqref{it:sd}, we have \begin{align*}
    \min_{Q \in \mathcal{F}} \max_{P \in \mathcal{B}} \D(P \| Q) = \min_{A \in \mathcal{A}} \max_{P \in \mathcal{B}} \widetilde{D}_\mathrm{KL}^\pi (P \| A) &= \sum_{i=1}^n w_i^* \widetilde{D}_\mathrm{KL}^\pi (P_i \| A^*_n (\mathbf{w}^*))\\ &= \sum_{i=1}^n w_i^* \D (P_i \| \otimes_{k=1}^m \overline{P}(\mathbf{w}^*)^{(S_k)}),
\end{align*}
which proves item \eqref{it:opt}.

We then show item \eqref{it:L_conc}. From \eqref{eq:minimax_KL_gen_dual}, we have \begin{align*}
    \sum_{i=1}^n w_i \D (P_i \| \otimes_{k=1}^m \overline{P}(\mathbf{w})^{(S_k)}) = \sum_{i=1}^n w_i \widetilde{D}_\mathrm{KL}^\pi (P_i \| A_n^*) = \min_{r \geq 0,~ A\in \mathcal{A}} L(r, A, \mathbf{w}),
\end{align*}
hence the map \begin{align*}
    \mathcal{S}_n \ni \mathbf{w} \mapsto \sum_{i=1}^n w_i \D (P_i \| \otimes_{k=1}^m \overline{P}(\mathbf{w})^{(S_k)})
\end{align*}
is concave since it is the Lagrangian dual function of problem \eqref{eq:minimax_KL_gen_rev} (see Section 5.1.2 of \cite{boyd2004convex}).
\end{proof}

\section{An information-theoretic game}\label{sec:game}

Inspired by the reversiblization entropy games in \cite{choi2023markov}, we cast the minimax problem as a two–player zero–sum game between Nature and a probabilist. Nature chooses a transition probability matrix $P \in \mathcal{B}$, while the probabilist chooses an approximating factorizable transition matrix $Q \in \mathcal{F} = \mathcal{F}(\mathbf{S})$. The payoff from the probabilist to Nature is the KL divergence $\D(P\|Q)$, which Nature aims to \emph{maximize} while the probabilist aims to \emph{minimize}. 

In the \emph{pure strategy game}, Nature selects a single $P\in\mathcal{B}$ and the probabilist selects a single $Q\in\mathcal{F}$. In the \emph{mixed strategy game}, Nature is permitted to randomize over $\mathcal{B}$ according to a probability distribution $\mu\in\mathcal{P}(\mathcal{B})$ (which corresponds to a weight vector $\mathbf{w} \in \mathcal{S}_n$), while the probabilist still chooses a single $Q\in\mathcal{F}$.

We adapt the following notations for some related minimax and maximin values: \begin{align*}
    \overline{V} = \overline{V}(\mathbf{S}, \mathcal{B}) &:= \min_{Q \in \mathcal{F}} \max_{\mu \in \mathcal{P}(\mathcal{B})} \int_\mathcal{B} \D (P \| Q) \mu(\mathrm{d}P), \\
    \underline{V} = \underline{V}(\mathbf{S}, \mathcal{B}) &:= \max_{\mu \in \mathcal{P}(\mathcal{B})} \min_{Q \in \mathcal{F}} \int_\mathcal{B} \D (P \| Q) \mu(\mathrm{d}P), \\
    \overline{v} = \overline{v}(\mathbf{S}, \mathcal{B}) &:= \min_{Q \in \mathcal{F}} \max_{P \in \mathcal{B}} \D (P \| Q), \\
    \underline{v} = \underline{v}(\mathbf{S}, \mathcal{B}) &:= \max_{P \in \mathcal{B}} \min_{Q \in \mathcal{F}} \D (P \| Q).
\end{align*}

From item~\eqref{it:opt} of Theorem~\ref{thm:equiv}, the pure-strategy minimax value $\overline{v}$ is equivalent to the dual problem: \begin{align}\label{eq:over_v}
    \overline{v} = \min_{Q \in \mathcal{F}} \max_{P \in \mathcal{B}} \D (P \| Q) = \max_{\mathbf{w} \in \mathcal{S}_n} \sum_{i=1}^n w_i \D (P_i \| \otimes_{j=1}^m \overline{P}(\mathbf{w})^{(S_j)}).
\end{align}

The following theorem establishes the existence of a mixed-strategy Nash equilibrium (see Section 3 of \cite{osborne1994course}), which is a foundational result in game theory.

\begin{theorem}[Existence of mixed strategy Nash equilibrium]\label{thm:nash}
Consider the two-person mixed strategy game with respect to parameters $(\mathbf{S}, \mathcal{B})$,
\begin{enumerate}
    \item\label{it:nash_exist} The mixed strategy Nash equilibrium always exists. That is, the value of the game is well-defined and given by
    \begin{align*}
        \overline{V}(\mathbf{S}, \mathcal{B}) = \underline{V}(\mathbf{S}, \mathcal{B}) = \max_{\mathbf{w} \in \mathcal{S}_n} \sum_{i=1}^n w_i \D (P_i \| \otimes_{j=1}^m \overline{P}(\mathbf{w})^{(S_j)}).
    \end{align*}
    \item\label{it:nash_Qmu} The mixed strategy Nash equilibrium is attained at $(Q^*, \mu^*)$, where $\mu^*$ is represented by the optimal weight vector $\mathbf{w}^* \in \mathcal{S}_n$ and $Q^*$ is the information projection of the corresponding weighted average $\overline{P}(\mathbf{w}^*)$ onto $\mathcal{F}$, i.e. \begin{align*}
        Q^* = \otimes_{j=1}^m \overline{P}(\mathbf{w}^*)^{(S_j)}.
    \end{align*}
\end{enumerate}
\end{theorem}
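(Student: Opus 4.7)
The plan is to leverage the finiteness of $\mathcal B$ to identify $\mathcal P(\mathcal B)$ with $\mathcal S_n$ via $\mu\leftrightarrow\mathbf w=(\mu(P_1),\ldots,\mu(P_n))$, so every mixed-strategy quantity becomes a functional of $\mathbf w$, and then to chain the four values $\overline V,\underline V,\overline v,\underline v$ together using the structural results already in hand. For $\overline V$, I would observe that $\mathbf w\mapsto\sum_{i=1}^n w_i\D(P_i\|Q)$ is linear on the simplex, so its maximum over $\mathcal S_n$ is attained at an extreme point, giving $\max_{\mathbf w\in\mathcal S_n}\sum_{i=1}^n w_i\D(P_i\|Q)=\max_{i\in\llbracket n\rrbracket}\D(P_i\|Q)=\max_{P\in\mathcal B}\D(P\|Q)$ and hence $\overline V=\overline v$. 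For $\underline V$, Lemma~\ref{lem:pyth_KL_m} evaluates the inner minimum in closed form as $\sum_{i=1}^n w_i\D(P_i\|\otimes_{j=1}^m\overline P(\mathbf w)^{(S_j)})$, so $\underline V$ equals exactly the concave maximization on the right-hand side of \eqref{eq:intro-dual}; by item~\eqref{it:opt} of Theorem~\ref{thm:equiv}, this equals $\overline v$ as well. Chaining gives $\overline V=\overline v=\underline V$, proving item~\eqref{it:nash_exist}.

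For item~\eqref{it:nash_Qmu}, I would verify the two best-response conditions for the candidate equilibrium $(Q^*,\mu^*)$, where $\mu^*\leftrightarrow\mathbf w^*$ is the maximizer from item~\eqref{it:sd} of Theorem~\ref{thm:equiv} and $Q^*=\otimes_{j=1}^m\overline P(\mathbf w^*)^{(S_j)}$. The probabilist's best-response condition is immediate from Lemma~\ref{lem:pyth_KL_m}: $Q^*$ is the factorizable minimizer of $\sum_{i=1}^n w_i^*\D(P_i\|\cdot)$ over $\mathcal F$. For Nature's side, I would invoke the complementary slackness of item~\eqref{it:cs} of Theorem~\ref{thm:equiv}: writing $r=\overline v$, we have $\D(P_i\|Q^*)=r$ whenever $w_i^*>0$ and $\D(P_i\|Q^*)\leq r$ otherwise, so
\begin{align*}
\sum_{i=1}^n w_i^*\D(P_i\|Q^*)=r=\max_{i\in\llbracket n\rrbracket}\D(P_i\|Q^*)\geq \sum_{i=1}^n w_i\D(P_i\|Q^*)
\end{align*}
for every $\mathbf w\in\mathcal S_n$, certifying $\mu^*$ as a best response to $Q^*$ and thus confirming $(Q^*,\mu^*)$ is a saddle point.

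The main obstacle is conceptually modest because the heavy lifting—strong duality and the Pythagorean minimization—is already encoded in Theorem~\ref{thm:equiv} and Lemma~\ref{lem:pyth_KL_m}; the proof is essentially an application of these results to the game-theoretic formulation. The one point requiring genuine care is Nature's best-response verification, where one must rule out the possibility that some unsupported index $i$ with $w_i^*=0$ yields $\D(P_i\|Q^*)>r$, which would break the saddle-point property. This is exactly where complementary slackness does the work, mirroring the contradiction step in the proof of item~\eqref{it:equiv} of Theorem~\ref{thm:equiv}.
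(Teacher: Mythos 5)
Your proof is correct and follows essentially the same route as the paper: both reduce $\overline V$ and $\underline V$ to the common dual value $\overline v = \max_{\mathbf w\in\mathcal S_n}\sum_i w_i \D(P_i\|\otimes_j\overline P(\mathbf w)^{(S_j)})$ by combining Lemma~\ref{lem:pyth_KL_m} with item~\eqref{it:opt} of Theorem~\ref{thm:equiv}. Your direct extreme-point argument for $\overline V=\overline v$ (replacing the paper's citation of an external minimax inequality to close the sandwich $\overline v\geq\overline V\geq\underline V\geq\overline v$) and your explicit complementary-slackness verification of Nature's best response in item~\eqref{it:nash_Qmu} are sound, and in fact spell out details the paper leaves implicit.
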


\begin{proof}
We first show existence in item~\eqref{it:nash_exist}. By Proposition 3.10 of \cite{choi2023markov}, we have the standard minimax inequalities $\overline{v}(\mathbf{S}, \mathcal{B}) \geq \overline{V}(\mathbf{S}, \mathcal{B}) \geq \underline{V}(\mathbf{S}, \mathcal{B})$. We can also establish a lower bound for $\underline{V}$ by restricting Nature's strategy space from all probability measures $\mathcal{P}(\mathcal{B})$ to the simplex of finite measures $\mathcal{S}_n$:
\begin{align*}
    \underline{V} = \underline{V}(\mathbf{S}, \mathcal{B}) &= \max_{\mu \in \mathcal{P}(\mathcal{B})} \min_{Q \in \mathcal{F}} \int_\mathcal{B} \D (P \| Q) \mu(\mathrm{d}P) \\
    &\geq \max_{\mathbf{w} \in \mathcal{S}_n} \min_{Q \in \mathcal{F}} \sum_{i=1}^n w_i \D (P_i \| Q) \\
    &= \max_{\mathbf{w} \in \mathcal{S}_n} \sum_{i=1}^n w_i \D (P_i \| \otimes_{j=1}^m \overline{P}(\mathbf{w})^{(S_j)}) = \overline{v},
\end{align*}
where the second last equality comes from Lemma~\ref{lem:pyth_KL_m} and the final equality comes from \eqref{eq:over_v}. We have thus shown the chain of inequalities $\overline{v} \geq \overline{V} \geq \underline{V} \geq \overline{v}$, which enforces equality throughout. This implies $\overline{V} = \underline{V}$, confirming that the mixed-strategy Nash equilibrium exists.

Item~\eqref{it:nash_Qmu} follows from item~\eqref{it:nash_exist}. At the mixed-strategy Nash equilibrium, the pair of optimal strategies $(Q^*, \mu^*)$ is composed of Nature's optimal strategy $\mu^*$, which is represented by the optimal weight vector $\mathbf{w}^* \in \mathcal{S}_n$, and the probabilist's optimal pure strategy $Q^* \in \mathcal{F}$. Nature's strategy $\mathbf{w}^*$ is the solution to the dual maximization problem as in item~\eqref{it:opt} of Theorem~\ref{thm:equiv}, identifying the ``worst-case'' mixture in $\mathcal{B}$. In response to this specific mixture, the probabilist's unique best response $Q^*$ is the information projection of the corresponding weighted average model $\overline{P}(\mathbf{w}^*)$ onto the set of factorizable $\mathcal{F}$, which is explicitly given by $Q^* = \otimes_{j=1}^m \overline{P}(\mathbf{w}^*)^{(S_j)}$.
\end{proof}

\section{A projected subgradient algorithm}\label{sec:subgrad}
From Theorem \ref{thm:equiv}, since problems \eqref{eq:minmax_KL} and \eqref{eq:minimax_KL_gen} are equivalent (item \eqref{it:equiv}), hence by item \eqref{it:opt}, it suffices to solving the following convex minimization problem: 
\begin{align}\label{eq:subg}
    \min_{\mathbf{w} \in \mathcal{S}_n} \quad h(\mathbf{w}), 
\end{align}
where $h(\mathbf{w}) = - \sum_{i=1}^n w_i \D(P_i \| \otimes_{k=1}^m \overline{P}(\mathbf{w})^{(S_k)})$ is convex from item \eqref{it:L_conc}. We now compute a subgradient of $h$, through which we aim to propose a projected subgradient algorithm with theoretical guarantee.

\begin{theorem}[Subgradient of $h$ and an upper bound of its $l^2$-norm]\label{thm:subg}
    A subgradient of $h$ at $\mathbf{v} \in \mathcal{S}_n$ is given by $\mathbf{g} = \mathbf{g}(\mathbf{v}) = (g_1, \ldots, g_n) \in \mathbb{R}^n$, where for all $i \in \llbracket n \rrbracket$, we have \begin{align*}
        g_i = g_i(\mathbf{v}) = \D(P_n \| \otimes_{k=1}^m \overline{P}(\mathbf{v})^{(S_k)}) - \D (P_i \| \otimes_{k=1}^m \overline{P}(\mathbf{v})^{(S_k)}).
    \end{align*}
    The subgradient $\mathbf{g}$ satisfies that, for all $\mathbf{w}, \mathbf{v} \in \mathcal{S}_n$, \begin{align*}
        h(\mathbf{w}) \geq h(\mathbf{v}) + \sum_{i=1}^n g_i \cdot (w_i - v_i).
    \end{align*}
    Moreover, the $l^2$-norm of $\mathbf{g}(\mathbf{v})$ is bounded above by \begin{align*}
        \|\mathbf{g}\|_2^2 = \sum_{i=1}^n g_i^2 \leq n \left( |\mathcal{X}| \sup_{\mathbf{v} \in \mathcal{S}_n;~ i \in \llbracket n \rrbracket;~ P_i (x, y) > 0 } P_i(x, y) \ln{\frac{P_i(x, y)}{\otimes_{k=1}^m \overline{P}(\mathbf{v})^{(S_k)} (x, y)}}\right)^2 := B.
    \end{align*}
\end{theorem}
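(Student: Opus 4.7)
The plan is to identify a subgradient by recognizing $-h$ as a pointwise minimum of affine functions in $\mathbf{w}$, then derive the stated formula via a shift argument on the simplex, and finally bound the $l^2$-norm through an elementary termwise estimate of each KL divergence.

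First, Lemma~\ref{lem:pyth_KL_m} gives
\[
-h(\mathbf{w}) = \sum_{i=1}^n w_i \D\bigl(P_i \| \otimes_{k=1}^m \overline{P}(\mathbf{w})^{(S_k)}\bigr) = \min_{Q \in \mathcal{F}} \sum_{i=1}^n w_i \D(P_i \| Q),
\]
so $-h$ is the infimum of the affine maps $\mathbf{w} \mapsto \sum_i w_i \D(P_i \| Q)$ indexed by $Q \in \mathcal{F}$ (re-confirming the concavity of $-h$). For fixed $\mathbf{v} \in \mathcal{S}_n$, the same lemma identifies the minimizer at $\mathbf{v}$ as $Q^* = \otimes_{k=1}^m \overline{P}(\mathbf{v})^{(S_k)}$, so
\[
-h(\mathbf{w}) \leq \sum_{i=1}^n w_i \D(P_i \| Q^*) \quad \text{for all } \mathbf{w}, \qquad -h(\mathbf{v}) = \sum_{i=1}^n v_i \D(P_i \| Q^*).
\]
Subtracting yields $h(\mathbf{w}) - h(\mathbf{v}) \geq \sum_i (v_i - w_i) \D(P_i \| Q^*)$, so $\tilde g_i := -\D(P_i \| Q^*)$ is a valid subgradient of $h$ at $\mathbf{v}$.

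Next, since $\mathbf{w}, \mathbf{v} \in \mathcal{S}_n$ implies $\sum_i(w_i - v_i) = 0$, adding any constant $c$ to every coordinate of $\tilde{\mathbf{g}}$ preserves the subgradient inequality. Choosing $c := \D(P_n \| Q^*)$ gives $g_i = c + \tilde g_i = \D(P_n \| Q^*) - \D(P_i \| Q^*)$, which is the claimed formula (and has $g_n = 0$). For the norm estimate, the nonnegativity of the KL divergence gives $|g_i| \leq \max\bigl(\D(P_n \| Q^*), \D(P_i \| Q^*)\bigr)$. For any such divergence, every summand $P_i(x,y) \ln\bigl[P_i(x,y)/Q^*(x,y)\bigr]$ on the set $\{(x,y) : P_i(x,y) > 0\}$ is bounded by the sup $K$ appearing in the statement, so
\[
\D(P_i \| Q^*) \leq K \sum_{\substack{x,y \\ P_i(x,y)>0}} \pi(x) \leq K \sum_{x,y \in \mathcal{X}} \pi(x) = |\mathcal{X}|\, K,
\]
using $\sum_x \pi(x) = 1$ together with $|\mathcal{X}|$ choices of $y$. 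Squaring and summing over $i \in \llbracket n \rrbracket$ delivers $\|\mathbf{g}\|_2^2 \leq n |\mathcal{X}|^2 K^2 = B$.

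The only subtle point I anticipate is the constant-shift step, which relies on the subdifferential of a function on the simplex being defined only modulo the normal direction $\mathbf{1}$; the remainder reduces to a routine envelope argument for a minimum of affine functions and to elementary estimates for the KL divergence.
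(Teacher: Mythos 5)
Your proposal is correct and follows essentially the same route as the paper's proof: both rest on the minimization clause of Lemma~\ref{lem:pyth_KL_m} to obtain the linearization inequality at $Q^*=\otimes_{k=1}^m \overline{P}(\mathbf{v})^{(S_k)}$, both use $\sum_i(w_i-v_i)=0$ to shift the subgradient by the constant $\D(P_n\|Q^*)$, and both bound each KL term by $|\mathcal{X}|$ times the stated supremum before summing over $i$. Your framing of $-h$ as a pointwise minimum of affine functions is just a cleaner repackaging of the same argument, not a different method.
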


\begin{proof}
    By the Pythagorean identity (Lemma~\ref{lem:pyth_KL_m}), we have \begin{align*}
        \sum_{i=1}^n w_i \D (P_i \| \otimes_{k=1}^m \overline{P}(\mathbf{w})^{(S_k)}) \leq \sum_{i=1}^n w_i \D (P_i \| \otimes_{k=1}^m \overline{P}(\mathbf{v})^{(S_k)})
    \end{align*}
    for any $\mathbf{w}, \mathbf{v} \in \mathcal{S}_n$. Hence, \begin{align*}
        h(\mathbf{w}) - h(\mathbf{v}) &= -\sum_{i=1}^n w_i \D (P_i \| \otimes_{k=1}^m \overline{P}(\mathbf{w})^{(S_k)}) + \sum_{i=1}^n v_i \D (P_i \| \otimes_{k=1}^m \overline{P}(\mathbf{v})^{(S_k)}) \\
        &\geq -\sum_{i=1}^n (w_i - v_i) \D (P_i \| \otimes_{k=1}^m \overline{P}(\mathbf{v})^{(S_k)}) \\
        &= -\sum_{i=1}^n (w_i - v_i) \D (P_i \| \otimes_{k=1}^m \overline{P}(\mathbf{v})^{(S_k)}) + \sum_{i=1}^n (w_i - v_i) \D (P_n \| \otimes_{k=1}^m \overline{P}(\mathbf{v})^{(S_k)}) \\
        &= \sum_{i=1}^n (w_i - v_i) g_i,
    \end{align*}
    where the second last equation holds because $\mathbf{w}, \mathbf{v} \in \mathcal{S}_n$, and hence $\sum_{i=1}^n (w_i - v_i) = 0$.
    
    We proceed to prove the upper bound on the $l^2$-norm. We first show the upper bound of the KL divergence term: \begin{align*}
        \D (P_i \| \otimes_{k=1}^m \overline{P}(\mathbf{v})^{(S_k)}) &= \sum_{x \in \mathcal{X}} \pi(x) \sum_{y \in \mathcal{X}} P_i (x, y) \ln{\frac{P_i (x, y)}{\otimes_{k=1}^m \overline{P}(\mathbf{v})^{(S_k)} (x, y)}} \\
        &\leq |\mathcal{X}| \sup_{\mathbf{v} \in \mathcal{S}_n;~ i \in \llbracket n \rrbracket;~ P_i(x, y)>0} P_i (x, y) \ln{\frac{P_i (x, y)}{\otimes_{k=1}^m \overline{P}(\mathbf{v})^{(S_k)} (x, y)}} = \sqrt{\frac{B}{n}},
    \end{align*}
    where $|\mathcal{X}|$ denotes the cardinality of the state space $\mathcal{X}$. Then we have \begin{align*}
        \|\mathbf{g}\|_2^2 = \sum_{i=1}^n g_i^2 &\leq \sum_{i=1}^n \max \left\{\D (P_n \| \otimes_{k=1}^m \overline{P}(\mathbf{v})^{(S_k)})^2, \D (P_i \| \otimes_{k=1}^m \overline{P}(\mathbf{v})^{(S_k)})^2 \right\} \\
        &\leq n \max_{l \in \llbracket n \rrbracket} \D (P_l \| \otimes_{k=1}^m \overline{P}(\mathbf{v})^{(S_k)})^2 \leq n \cdot \sqrt{\frac{B}{n}}^2 = B.
    \end{align*}
\end{proof}

Inspired by Algorithm 1 of \cite{choi2023markov}, we propose a projected subgradient algorithm to solve problem \eqref{eq:subg}. In Algorithm~\ref{alg:subg}, we conduct the projected subgradient algorithm for $t$ iterations. At each iteration, we first update the weight parameters via subgradient,
\begin{align*}
    \mathbf{v}^{(i)} = \mathbf{w}^{(i-1)} - \eta \cdot \mathbf{g}(\mathbf{w}^{(i-1)}),
\end{align*}
where $\eta > 0$ is the stepsize of the algorithm while we take $\mathbf{g}$ as in Theorem~\ref{thm:subg}, the subgradient of $h$. In the second step, the updated weight $\mathbf{v}^{(i)}$ is to be projected onto the $n$-probability-simplex $\mathcal{S}_n$, i.e. \begin{align*}
    \mathbf{w}^{(i)} = \argmin_{\mathbf{w} \in \mathcal{S}_n} \|\mathbf{w} - \mathbf{v}^{(i)}\|_2^2,
\end{align*} 
which can be accomplished by existing projection algorithms onto a simplex (see e.g. \cite{condat2016fast}).
Note that the subgradient algorithm is not a descent algorithm, hence the monotonicity of $h(\mathbf{w})$ among different iterations is not guaranteed, see Section \ref{subsec:num_subg} for examples.

\SetKwInput{Input}{Input}
\SetKwInput{Output}{Output}

\begin{algorithm}[H]
\caption{A projected subgradient algorithm to solve problem \eqref{eq:subg}}
\label{alg:subg}
\Input{Initial weight value $\mathbf{w}^{(0)} \in \mathcal{S}_n$, set $\{P_i\}_{i=1}^n$, target distribution $\pi$, stepsize $\eta > 0$, and number of iterations $t$.}
\Output{The sequence $\bigl(\mathbf{w}^{(i)}\bigr)_{i=1}^t$.}
\For{$i = 1,2,\ldots,t$}{
  $\mathbf{v}^{(i)} = \mathbf{w}^{(i-1)} - \eta \cdot \mathbf{g}(\mathbf{w}^{(i-1)})$ \texttt{ // Update via subgradient descent}
  
  $\mathbf{w}^{(i)} = \argmin_{\mathbf{w} \in \mathcal{S}_n} \|\mathbf{w} - \mathbf{v}^{(i)}\|_2^2$ \texttt{ // Project onto $\mathcal{S}_n$}
}
\end{algorithm}

The rest of the section is devoted to providing a theoretical guarantee for Algorithm~\ref{alg:subg}. We first prove an upper bound of Algorithm \ref{alg:subg}.

\begin{theorem}[Upper bound of Algorithm~\ref{alg:subg}]\label{thm:subg_ub}
    Consider Algorithm~\ref{alg:subg} with its outputs $(\mathbf{w}^{(i)})_{i=1}^t$, we have 
    \begin{align*}
        h(\overline{\mathbf{w}}^t) - h(\mathbf{w}^*) \leq \frac{n}{2 \eta t} + \frac{\eta B}{2},
    \end{align*}
    where $\overline{\mathbf{w}}^t = \frac{1}{t} \sum_{i=1}^t \mathbf{w}^{(i)}$ and $\mathbf{w}^*$ is the optimal solution to problem \eqref{eq:subg}. Furthermore, if we choose constant stepsize $\eta = \sqrt{\frac{n}{Bt}}$, we have \begin{align*}
        h(\overline{\mathbf{w}}^t) - h(\mathbf{w}^*) \leq \sqrt{\frac{nB}{t}}.
    \end{align*}
    In addition, given any $\epsilon > 0$, if we further choose \begin{align*}
        t = \left\lceil \frac{nB}{\epsilon^2} \right\rceil,
    \end{align*}
    then we can reach an $\epsilon$-close value to $h(\mathbf{w}^*)$ such that \begin{align*}
        h(\overline{\mathbf{w}}^t) - h(\mathbf{w}^*) \leq \epsilon.
    \end{align*}
\end{theorem}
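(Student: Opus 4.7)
The plan is to follow the textbook projected subgradient convergence analysis, adapted to the simplex constraint on $\mathcal{S}_n$. Three ingredients from the paper do all the work: convexity of $h$ (Theorem~\ref{thm:equiv}, item~\eqref{it:L_conc}), non-expansiveness of the Euclidean projection onto the closed convex set $\mathcal{S}_n$, and the subgradient formula together with the uniform bound $\|\mathbf{g}(\mathbf{v})\|_2^2 \leq B$ from Theorem~\ref{thm:subg}.

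I would first derive the standard one-step descent inequality. Fix an optimum $\mathbf{w}^*$. Non-expansiveness of the projection onto $\mathcal{S}_n$ gives $\|\mathbf{w}^{(i)} - \mathbf{w}^*\|_2^2 \leq \|\mathbf{v}^{(i)} - \mathbf{w}^*\|_2^2$; expanding the right-hand side yields
\[
\|\mathbf{w}^{(i)} - \mathbf{w}^*\|_2^2 \leq \|\mathbf{w}^{(i-1)} - \mathbf{w}^*\|_2^2 - 2\eta \langle \mathbf{g}(\mathbf{w}^{(i-1)}), \mathbf{w}^{(i-1)} - \mathbf{w}^*\rangle + \eta^2 \|\mathbf{g}(\mathbf{w}^{(i-1)})\|_2^2.
\]
Applying the subgradient inequality from Theorem~\ref{thm:subg}, namely $\langle \mathbf{g}(\mathbf{w}^{(i-1)}), \mathbf{w}^{(i-1)} - \mathbf{w}^*\rangle \geq h(\mathbf{w}^{(i-1)}) - h(\mathbf{w}^*)$, together with $\|\mathbf{g}(\mathbf{w}^{(i-1)})\|_2^2 \leq B$, and summing over $i = 1, \ldots, t$ telescopes the squared-distance terms. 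Dividing by $t$ and invoking Jensen's inequality through the convexity of $h$ gives
\[
h(\overline{\mathbf{w}}^t) - h(\mathbf{w}^*) \leq \frac{\|\mathbf{w}^{(0)} - \mathbf{w}^*\|_2^2}{2\eta t} + \frac{\eta B}{2}.
\]
Since $\mathbf{w}^{(0)}, \mathbf{w}^* \in \mathcal{S}_n \subseteq [0,1]^n$, a coordinate-wise bound $|w_i^{(0)} - w_i^*| \leq 1$ yields $\|\mathbf{w}^{(0)} - \mathbf{w}^*\|_2^2 \leq n$, which recovers the stated bound $\tfrac{n}{2\eta t} + \tfrac{\eta B}{2}$.

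The remaining two parts are elementary calculus. Minimizing $\tfrac{n}{2\eta t} + \tfrac{\eta B}{2}$ over $\eta > 0$ is solved by $\eta = \sqrt{n/(Bt)}$, which plugs back in to give $\sqrt{nB/t}$. Setting $\sqrt{nB/t} \leq \epsilon$ and inverting gives $t \geq nB/\epsilon^2$, so choosing $t = \lceil nB/\epsilon^2 \rceil$ suffices.

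The main obstacle is essentially bookkeeping rather than conceptual: the natural telescoping produces $\sum_{i=1}^t h(\mathbf{w}^{(i-1)})$ at iterates $\mathbf{w}^{(0)}, \ldots, \mathbf{w}^{(t-1)}$, whereas the average $\overline{\mathbf{w}}^t = \tfrac{1}{t} \sum_{i=1}^t \mathbf{w}^{(i)}$ in the theorem ranges over $\mathbf{w}^{(1)}, \ldots, \mathbf{w}^{(t)}$. Either the Jensen step is applied to the shifted running average $\tfrac{1}{t} \sum_{i=0}^{t-1} \mathbf{w}^{(i)}$ (the standard convention in the subgradient literature), or the identical recursion is written with subgradients evaluated at $\mathbf{w}^{(i)}$ for $i = 1, \ldots, t$; both conventions recover the claimed bound.
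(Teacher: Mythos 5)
Your proposal is correct and follows essentially the same route as the paper's own proof: non-expansiveness of the simplex projection, the one-step expansion combined with the subgradient inequality and the bound $\|\mathbf{g}\|_2^2 \leq B$ from Theorem~\ref{thm:subg}, telescoping, the crude bound $\|\mathbf{w}^{(0)} - \mathbf{w}^*\|_2^2 \leq n$, Jensen via convexity of $h$, and AM--GM to pick $\eta$. Your remark on the index bookkeeping for the running average is a fair point that the paper's own proof also glosses over, and it does not affect the stated bound.
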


\begin{proof}
    For all $i \in \llbracket t \rrbracket$, due to projection, we have 
    \begin{align*}
        \|\mathbf{w}^{(i+1)} - \mathbf{w}^*\|_2^2 &\leq \|\mathbf{v}^{(i+1)} - \mathbf{w}^*\|_2^2 = \| \mathbf{w}^{(i)} - \eta \cdot \mathbf{g}(\mathbf{w}^{(i)}) - \mathbf{w}^*\|_2^2 \\
        &= \| \mathbf{w}^{(i)} - \mathbf{w}^* \|_2^2 + \eta^2 \|\mathbf{g}(\mathbf{w}^{(i)})\|^2 - 2\eta \mathbf{g}(\mathbf{w}^{(i)})(\mathbf{w}^{(i)} - \mathbf{w}^*)\\
        &\leq \|\mathbf{w}^{(i)} - \mathbf{w}^*\|_2^2 + \eta^2 B - 2\eta \mathbf{g}(\mathbf{w}^{(i)})(\mathbf{w}^{(i)} - \mathbf{w}^*),
    \end{align*}
    where the last inequality come from the upper bound in Theorem~\ref{thm:subg}. We then apply the definition of subgradient $\mathbf{g}$ in Theorem~\ref{thm:subg}, and it leads to \begin{align*}
        h(\mathbf{w}^{(i)}) - h(\mathbf{w}^*) &\leq \mathbf{g}(\mathbf{w}^{(i)}) \cdot (\mathbf{w}^{(i)} - \mathbf{w}^*) \\
        &\leq \frac{1}{2\eta} \left(\|\mathbf{w}^{(i)} - \mathbf{w}^*\|_2^2 - \|\mathbf{w}^{(i+1)} - \mathbf{w}^*\|_2^2\right) + \frac{\eta B}{2}.
    \end{align*}
    We then take summation on $i$ from $1$ to $t$ and obtain
    \begin{align*}
        \sum_{i=1}^t (h(\mathbf{w}^{(i)}) - h(\mathbf{w}^*)) &\leq \frac{1}{2\eta} \left(\|\mathbf{w}^{(1)} - \mathbf{w}^*\|_2^2 - \|\mathbf{w}^{(t+1)} - \mathbf{w}^*\|_2^2\right) + \frac{\eta Bt}{2} \\
        &\leq \frac{1}{2\eta} \|\mathbf{w}^{(i)} - \mathbf{w}^*\|_2^2 + \frac{\eta Bt}{2} \leq \frac{n}{2\eta} + \frac{\eta Bt}{2},
    \end{align*}
    where the last inequality holds because $\mathbf{w}^{(i)}, \mathbf{w}^* \in \mathcal{S}_n$. From the convexity of $h$, we have \begin{align*}
        h(\overline{\mathbf{w}}^t) - h(\mathbf{w}^*) \leq \frac{1}{t} \left( \sum_{i=1}^t (h(\mathbf{w}^{(i)}) - h(\mathbf{w}^*)) \right) \leq \frac{n}{2\eta t} + \frac{\eta B}{2}.
    \end{align*}
    By AM-GM inequality, the right hand side is minimized when we choose stepsize $\eta = \sqrt{\frac{n}{Bt}}$, we then obtain \begin{align*}
        h(\overline{\mathbf{w}}^t) - h(\mathbf{w}^*) \leq \sqrt{\frac{nB}{t}}.
    \end{align*}
\end{proof}

We proceed to discuss the convergence rate of Algorithm~\ref{alg:subg}. We define the $\pi$-weighted \textbf{total variation distance} between $Q$ and $P$ as 
\begin{align*}
    D^\pi_\mathrm{TV}(P \| Q) := \dfrac{1}{2}\sum_{x,y \in \mathcal{X}} \pi(x) |P(x,y) - Q(x,y)|,
\end{align*}
and show the convergence rate of Algorithm~\ref{alg:subg}.

\begin{theorem}[Convergence rate of Algorithm~\ref{alg:subg}]\label{thm:subg_TV}
    Consider Algorithm~\ref{alg:subg} and its outputs $(\mathbf{w}^{(i)})_{i=1}^t$, and the stepsize is chosen to be $\eta = \sqrt{\frac{n}{Bt}}$, we have \begin{align*}
        D_\mathrm{TV}^\pi (\otimes_{k=1}^m \overline{P}(\overline{\mathbf{w}})^{(S_k)} \| \otimes_{k=1}^m \overline{P}(\mathbf{w}^*)^{(S_k)}) = \mathcal{O}\left(\frac{1}{\sqrt{t}}\right).
    \end{align*}
\end{theorem}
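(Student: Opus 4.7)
The plan is to reduce the asserted $\pi$-weighted total variation bound to the objective value gap $h(\overline{\mathbf{w}}^t) - h(\mathbf{w}^*)$ that is already controlled by Theorem~\ref{thm:subg_ub}. The reduction will proceed in three stages: first rewrite the target KL divergence between the two factorizable chains as a sum of block marginal KL divergences; then bound that sum by the objective gap via a Pythagorean argument combined with the minimax characterization in Theorem~\ref{thm:equiv}; finally pass from KL to TV through a $\pi$-weighted Pinsker inequality.

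For the first stage, I would exploit that both $\otimes_{k=1}^m \overline{P}(\overline{\mathbf{w}}^t)^{(S_k)}$ and $\otimes_{k=1}^m \overline{P}(\mathbf{w}^*)^{(S_k)}$ factorize across the same partition $\mathbf{S}$ and that $\pi$ marginalizes consistently along the blocks. A direct expansion of the KL definition then yields
\[
\D\bigl(\otimes_{k=1}^m \overline{P}(\overline{\mathbf{w}}^t)^{(S_k)} \,\big\|\, \otimes_{k=1}^m \overline{P}(\mathbf{w}^*)^{(S_k)}\bigr) = \sum_{j=1}^m D_\mathrm{KL}^{\pi^{(S_j)}}\bigl(\overline{P}(\overline{\mathbf{w}}^t)^{(S_j)} \,\big\|\, \overline{P}(\mathbf{w}^*)^{(S_j)}\bigr),
\]
so it suffices to bound the right hand side.

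For the second stage, I would apply Lemma~\ref{lem:pyth_KL_m} with weight vector $\overline{\mathbf{w}}^t$ and factorizable target $Q = \otimes_k \overline{P}(\mathbf{w}^*)^{(S_k)}$, then rearrange to express the above block sum as
\[
\sum_{i=1}^n \overline{w}_i^t \D\bigl(P_i \,\big\|\, \otimes_k \overline{P}(\mathbf{w}^*)^{(S_k)}\bigr) \;-\; \sum_{i=1}^n \overline{w}_i^t \D\bigl(P_i \,\big\|\, \otimes_k \overline{P}(\overline{\mathbf{w}}^t)^{(S_k)}\bigr).
\]
The subtracted term is exactly $-h(\overline{\mathbf{w}}^t)$, while the first term is at most $\max_{P \in \mathcal{B}} \D(P \,\|\, \otimes_k \overline{P}(\mathbf{w}^*)^{(S_k)})$, which by item~\eqref{it:opt} of Theorem~\ref{thm:equiv} equals $-h(\mathbf{w}^*)$. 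Thus the joint KL between the two factorizable chains is at most $h(\overline{\mathbf{w}}^t) - h(\mathbf{w}^*)$, which is $\mathcal{O}(1/\sqrt{t})$ by Theorem~\ref{thm:subg_ub}.

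The last stage uses a $\pi$-weighted Pinsker inequality: for each state $x$ the classical pointwise Pinsker bound gives $\tfrac{1}{2}\sum_y |R(x,y) - Q(x,y)| \le \sqrt{\tfrac{1}{2} D_\mathrm{KL}(R(x,\cdot)\|Q(x,\cdot))}$, and averaging against $\pi$ followed by Jensen's inequality produces $D_\mathrm{TV}^\pi(R\|Q) \le \sqrt{\D(R\|Q)/2}$. Applying this to $R = \otimes_k \overline{P}(\overline{\mathbf{w}}^t)^{(S_k)}$ and $Q = \otimes_k \overline{P}(\mathbf{w}^*)^{(S_k)}$ and substituting the KL bound from the second stage yields the claimed rate. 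The main obstacle I anticipate is not any single step in isolation but rather the bookkeeping in the first stage: the additive decomposition of the joint KL across coordinate blocks relies on each $\overline{P}(\mathbf{w})^{(S_k)}$ being $\pi^{(S_k)}$-stationary and on $\pi$ marginalizing to the $\pi^{(S_k)}$'s, which needs to be verified carefully before the telescoping into block marginals is justified; once that is in place, every subsequent step is a direct invocation of Lemma~\ref{lem:pyth_KL_m}, Theorem~\ref{thm:equiv}, Theorem~\ref{thm:subg_ub}, and the standard Pinsker conversion.
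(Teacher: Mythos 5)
Your first two stages are sound and in fact reconstruct exactly the quantity the paper works with: the tensorization identity $\D(\otimes_{k}\overline{P}(\overline{\mathbf{w}}^t)^{(S_k)}\,\|\,\otimes_{k}\overline{P}(\mathbf{w}^*)^{(S_k)})=\sum_{j}D_{\mathrm{KL}}^{\pi^{(S_j)}}(\overline{P}(\overline{\mathbf{w}}^t)^{(S_j)}\,\|\,\overline{P}(\mathbf{w}^*)^{(S_j)})$, combined with Lemma~\ref{lem:pyth_KL_m} at weight $\overline{\mathbf{w}}^t$, yields precisely the difference $\sum_{i}\overline{w}_i^t\D(P_i\,\|\,\otimes_k\overline{P}(\mathbf{w}^*)^{(S_k)})-\sum_{i}\overline{w}_i^t\D(P_i\,\|\,\otimes_k\overline{P}(\overline{\mathbf{w}}^t)^{(S_k)})$ that appears in the paper's first inequality, and your identification of the first term with $-h(\mathbf{w}^*)$ via item~\eqref{it:opt} of Theorem~\ref{thm:equiv} mirrors the paper's appeal to complementary slackness. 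The bookkeeping you flag as the main risk is indeed routine and not where the difficulty lies.

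The genuine gap is in your third stage. Pinsker gives $D_{\mathrm{TV}}^\pi(R\|Q)\le\sqrt{\D(R\|Q)/2}$, and your second stage only shows $\D(R\|Q)\le h(\overline{\mathbf{w}}^t)-h(\mathbf{w}^*)=\mathcal{O}(t^{-1/2})$; substituting therefore produces $D_{\mathrm{TV}}^\pi=\mathcal{O}(t^{-1/4})$, not the claimed $\mathcal{O}(t^{-1/2})$. The square root in Pinsker costs you half the exponent, and your closing assertion that this ``yields the claimed rate'' passes over this loss. The paper avoids it by invoking Equation 3.25 of \cite{csiszar1972class} to bound the total variation distance by a constant multiple of the KL \emph{difference} itself, i.e., linearly in $h(\overline{\mathbf{w}}^t)-h(\mathbf{w}^*)$, rather than by the square root of a KL divergence. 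To close your argument at the stated rate you would need such a first-power comparison between $D_{\mathrm{TV}}^\pi$ and the divergence gap (or a sharper bound on the gap itself); the standard Pinsker inequality alone does not supply it.
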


\begin{proof}
    From the convexity of KL divergence $\D(\cdot \| \cdot)$ and Equation 3.25 of \cite{csiszar1972class}, we have a constant $C$ such that 
    \begin{align*}
        &\quad D_\mathrm{TV}^\pi (\otimes_{k=1}^m \overline{P}(\overline{\mathbf{w}})^{(S_k)} \| \otimes_{k=1}^m \overline{P}(\mathbf{w}^*)^{(S_k)}) \\
        &\leq C \left(\sum_{i=1}^n \overline{w}_i^t \D(P_i \| \otimes_{k=1}^m \overline{P}(\mathbf{w}^*)^{(S_k)}) - \sum_{i=1}^n \overline{w}_i^t \D(P_i \| \otimes_{k=1}^m \overline{P}(\overline{\mathbf{w}}^{(i)})^{(S_k)})\right) \\
        &\leq C \left(\max_{i \in \llbracket n \rrbracket} \D(P_i \| \otimes_{k=1}^m \overline{P}(\mathbf{w}^*)^{(S_k)}) + h(\overline{\mathbf{w}}^t)\right) \\
        &= C(h(\overline{\mathbf{w}}^t) - h(\mathbf{w}^*)) = \mathcal{O} \left(\frac{1}{\sqrt{t}}\right),
    \end{align*}
    where the second last equality comes from the complementary slackness introduced in item \eqref{it:cs} of Theorem~\ref{thm:equiv}, and the last equality comes from Theorem~\ref{thm:subg_ub} as we choose the stepsize $\eta = \sqrt{\frac{n}{Bt}}$.
\end{proof}

\begin{remark}
    Theorem \ref{thm:subg_ub} and Theorem \ref{thm:subg_TV} establish the theoretical guarantee of Algorithm~\ref{alg:subg} through the averaged output $\overline{\mathbf{w}}^t$. However, in numerical experiments, we choose $\argmin_{i \in \llbracket t \rrbracket} h(\mathbf{w}^{(i)})$ as a possible output, see Section \ref{subsec:num_subg}.
\end{remark}

\section{A max-min-max submodular optimization problem and a two-layer subgradient-greedy algorithm}\label{sec:maxminmax}
Recall that in earlier sections we consider the minimax problem \eqref{eq:minmax_KL} and investigate its implications in the two-person game between Nature and probabilist. As the set $\mathcal{F}(\mathbf{S})$ depends on the choice of the partition $\mathbf{S}$, in this section we consider a max-min-max optimization problem of the form
\begin{align*}
    \max_{\mathbf{S} \in (m+1)^{\llbracket d \rrbracket}} \min_{Q \in \mathcal{F}} \max_{\mu \in \mathcal{P}(\mathcal{B})} \int_\mathcal{B} \D(P \| Q) \mu (\mathrm{d} P).
\end{align*}
In words, we seek to find an optimal partition the maximizes the minimal worst-case information loss. We write \begin{align}\label{eq:f_gen_KL}
    f(\mathbf{S}, \mathbf{w}) := \sum_{i=1}^n w_i \D (P_i \| \otimes_{j=1}^m \overline{P}(\mathbf{w})^{(S_j)}),
\end{align} 
and from the mixed-strategy Nash equilibrium (item~\eqref{it:nash_exist} of Theorem~\ref{thm:nash}), we can denote the inner part as 
\begin{align*}
    f(\mathbf{S}, \mathbf{w}^* (\mathbf{S})) &= \min_{Q \in \mathcal{F}} \max_{\mu \in \mathcal{P}(\mathcal{B})} \int_\mathcal{B} \D (P \| Q) \mu(\mathrm{d} P)\\ &= \max_{\mathbf{w} \in \mathcal{S}_n} \sum_{i=1}^n w_i \D (P_i \| \otimes_{j=1}^m \overline{P}(\mathbf{w})^{(S_j)}), \quad \mathbf{S} \in (m+1)^{\llbracket d \rrbracket}\\
    &= \sum_{i=1}^n w^*_i \D(P_i \| \otimes_{j=1}^m \overline{P}(\mathbf{w}^*)^{(S_j)}), \quad \mathbf{S} \in (m+1)^{\llbracket d \rrbracket}\\
    &= \sum_{i=1}^n w^*_i \D (P_i \| (\otimes_{j=1}^{m - 1} \overline{P}(\mathbf{w}^*)^{(S_j)}) \otimes \overline{P}(\mathbf{w}^*)^{(-\mathrm{supp}(\mathbf{S}))}), \quad \mathbf{S} \in m^{\llbracket d \rrbracket},
\end{align*}

where we write $$\mathbf{w}^* = \mathbf{w}^*(\mathbf{S}) = \argmax_{\mathbf{w} \in \mathcal{S}_n} f(\mathbf{S}, \mathbf{w}).$$

We furthermore choose the ground set $\mathbf{V} \in m^{\llbracket d \rrbracket}$ and cardinality constraint $l$, and instead consider the max-min-max optimization problem 
\begin{align}\label{eq:max_min_max}
    \max_{\mathbf{S} \preceq \mathbf{V};~ |\mathrm{supp}(\mathbf{S})| \leq l} f(\mathbf{S}, \mathbf{w}^*(\mathbf{S})).
\end{align}

We then investigate the following map for fixed $\mathbf{w} \in \mathcal{S}_n$ through the lens of submodularity: 
\begin{align}\label{map:SKL_gen}
    m^{\llbracket d \rrbracket} \ni \mathbf{S} \mapsto f(\mathbf{S}) = f(\mathbf{S}, \mathbf{w}) := \sum_{i=1}^n w_i \D (P_i \| (\otimes_{j=1}^{m - 1} \overline{P}(\mathbf{w})^{(S_j)}) \otimes \overline{P}(\mathbf{w})^{(-\mathrm{supp}(\mathbf{S}))}).
\end{align}

\begin{lemma}
    The map \eqref{map:SKL_gen} is orthant submodular.
\end{lemma}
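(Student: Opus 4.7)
The plan is to reduce the claim to the already-established submodularity of $S \mapsto H(\overline{P}(\mathbf{w})^{(S)})$ from item~\eqref{it:submod_ent_w} of Theorem~\ref{thm:submod_mc}. First, following the same manipulation as in the proof of item~\eqref{it:submod_KL_w} of Theorem~\ref{thm:submod_mc}, I would rewrite the objective in entropy form. Distributing $\log(\otimes_{j=1}^{m-1} \overline{P}^{(S_j)} \otimes \overline{P}^{(-\mathrm{supp}(\mathbf{S}))})$ through the sum and using that $\sum_i w_i P_i^{(S_j)} = \overline{P}^{(S_j)}$ turns each cross term into an entropy rate of $\overline{P}(\mathbf{w})$, yielding
\begin{align*}
    f(\mathbf{S}, \mathbf{w}) = \sum_{j=1}^{m-1} H(\overline{P}(\mathbf{w})^{(S_j)}) + H(\overline{P}(\mathbf{w})^{(-\mathrm{supp}(\mathbf{S}))}) - \sum_{i=1}^n w_i H(P_i).
\end{align*}
Since the last summand does not depend on $\mathbf{S}$, it vanishes in every marginal gain, so it is enough to verify orthant submodularity of $g(\mathbf{S}) := \sum_{j=1}^{m-1} H(\overline{P}^{(S_j)}) + H(\overline{P}^{(-\mathrm{supp}(\mathbf{S}))})$.

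Second, I would fix $\mathbf{S} \preceq \mathbf{T}$, $i \in \llbracket m-1 \rrbracket$, and $e \notin \mathrm{supp}(\mathbf{T})$, and observe that adding $e$ to the $i$-th slot of $\mathbf{S}$ both enlarges $S_i$ to $S_i \cup \{e\}$ and removes $e$ from the complement $-\mathrm{supp}(\mathbf{S})$, so
\begin{align*}
    \Delta_{e,i} g(\mathbf{S}) = \bigl[H(\overline{P}^{(S_i \cup \{e\})}) - H(\overline{P}^{(S_i)})\bigr] - \bigl[H(\overline{P}^{(-\mathrm{supp}(\mathbf{S}))}) - H(\overline{P}^{(-\mathrm{supp}(\mathbf{S})) \setminus \{e\}})\bigr],
\end{align*}
and analogously for $\mathbf{T}$. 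The first bracket is the standard ``add $e$'' marginal of $H(\overline{P}^{(\cdot)})$; since $S_i \subseteq T_i$ and $e \notin T_i$, submodularity directly gives that its value at $\mathbf{S}$ is at least its value at $\mathbf{T}$. The second bracket is \emph{also} an ``add $e$'' marginal of the same map, evaluated at the sets $-\mathrm{supp}(\mathbf{S}) \setminus \{e\}$ and $-\mathrm{supp}(\mathbf{T}) \setminus \{e\}$; because $\mathbf{S} \preceq \mathbf{T}$ gives $-\mathrm{supp}(\mathbf{T}) \setminus \{e\} \subseteq -\mathrm{supp}(\mathbf{S}) \setminus \{e\}$, submodularity says that the unsigned bracket is \emph{larger} at $\mathbf{T}$ than at $\mathbf{S}$, which after the minus sign in front becomes the inequality we want. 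Summing the two bracket inequalities delivers $\Delta_{e,i} g(\mathbf{S}) \geq \Delta_{e,i} g(\mathbf{T})$, which is orthant submodularity by definition.

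The main obstacle is purely bookkeeping rather than anything deep: one must keep straight that enlarging a slot $S_i$ \emph{shrinks} the residual set $-\mathrm{supp}(\mathbf{S})$, so the ``complement entropy'' summand is antitone in $\mathbf{S}$ in the inclusion order on the support. Consequently, one applies the submodularity of $S \mapsto H(\overline{P}^{(S)})$ twice with opposite orientations---once in the natural direction for the $S_i$ term and once with the inclusion flipped by complementation for the support-complement term---and has to confirm that the two signs combine favorably. Modulo this careful tracking of inclusions, the argument is a direct consequence of item~\eqref{it:submod_ent_w} of Theorem~\ref{thm:submod_mc}, and no new analytic estimate is required.
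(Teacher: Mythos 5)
Your proposal is correct and follows essentially the same route as the paper: both reduce the marginal gains of $f$ to entropy differences $H(\overline{P}^{(S_j\cup\{e\})})-H(\overline{P}^{(S_j)})$ and the corresponding complement terms, and then apply the submodularity of $S\mapsto H(\overline{P}(\mathbf{w})^{(S)})$ (item~\eqref{it:submod_ent_w} of Theorem~\ref{thm:submod_mc}) twice, once for the enlarged slot and once, with the inclusion reversed by complementation, for the residual set. Your explicit entropy decomposition of $f$ and the sign bookkeeping for the $-\mathrm{supp}(\mathbf{S})$ term are exactly what the paper's proof does implicitly.
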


\begin{proof}
    We shall prove that $\Delta_{e, j} f(\mathbf{S}) \geq \Delta_{e, j} f(\mathbf{T})$ from the definition of orthant submodularity, where we choose $\mathbf{S} \preceq \mathbf{T}$ and $e \notin \mathrm{supp}(\mathbf{T})$.
    \begin{align*}
        \Delta_{e, j} f(\mathbf{S}) - \Delta_{e, j} f(\mathbf{T}) &= \sum_{i=1}^n w_i \left(H(\overline{P}^{(S_j \cup \{e\})}) - H(\overline{P}^{(S_j)}) + H(\overline{P}^{(-\mathrm{supp}(\mathbf{S}) \cup \{e\})}) - H(\overline{P}^{(-\mathrm{supp}(\mathbf{S}))})\right) \\
        &\quad - \sum_{i=1}^n w_i \left(H(\overline{P}^{(T_j \cup \{e\})}) - H(\overline{P}^{(T_j)}) + H(\overline{P}^{(-\mathrm{supp}(\mathbf{T}) \cup \{e\})}) - H(\overline{P}^{(-\mathrm{supp}(\mathbf{T}))})\right) \\
        &= \left[\left(H(\overline{P}^{(S_j \cup \{e\})}) - H(\overline{P}^{(S_j)})\right) - \left(H(\overline{P}^{(T_j \cup \{e\})}) - H(\overline{P}^{(T_j)})\right)\right] \\
        &\quad + \left[\left(H(\overline{P}^{(-\mathrm{supp}(\mathbf{T}))}) - H(\overline{P}^{(-\mathrm{supp}(\mathbf{T})\cup \{e\})})\right) - \left(H(\overline{P}^{(-\mathrm{supp}(\mathbf{S}))}) - H(\overline{P}^{(-\mathrm{supp}(\mathbf{S}) \cup \{e\})})\right)\right].
    \end{align*}
    Since the map $S \mapsto H(\overline{P}^{(S)})$ is submodular (see item~\ref{it:submod_ent_w} of Theorem \ref{thm:submod_mc}) and $\mathbf{S} \preceq \mathbf{T}$, then we have \begin{align*}
        \left(H(\overline{P}^{(S_j \cup \{e\})}) - H(\overline{P}^{(S_j)})\right) - \left(H(\overline{P}^{(T_j \cup \{e\})}) - H(\overline{P}^{(T_j)})\right) &\geq 0,\\
        \left(H(\overline{P}^{(-\mathrm{supp}(\mathbf{T}))}) - H(\overline{P}^{(-\mathrm{supp}(\mathbf{T})\cup \{e\})})\right) - \left(H(\overline{P}^{(-\mathrm{supp}(\mathbf{S}))}) - H(\overline{P}^{(-\mathrm{supp}(\mathbf{S}) \cup \{e\})})\right) &\geq 0.
    \end{align*}
    Therefore $\Delta_{e, j} f(\mathbf{S}) - \Delta_{e, j} f(\mathbf{T}) \geq 0$ and hence the map \eqref{map:SKL_gen} is orthant submodular.
\end{proof}

In view of Theorem 2.6 of \cite{lai2025information}, since the map~\eqref{map:SKL_gen} is orthant submodular, then for any $\beta = \beta(\mathbf{w}) \in \mathbb{R}$, if $\mathbf{S} \preceq \mathbf{V}$, we have the following monotonically non-decreasing $(m - 1)$-submodular function:
\begin{align}\label{eq:g_KL_gen}
    g(\mathbf{S}, \mathbf{w}) &:= f(\mathbf{S}) - \beta + \sum_{j=1}^{m - 1} \sum_{e \in S_j} (f(V_1, \ldots, V_j, \ldots, V_{m - 1})) - f(V_1, \ldots, V_j \backslash \{e\}, \ldots, V_{m - 1})) \nonumber\\
    &= f(\mathbf{S}) - \beta + \sum_{i=1}^n \sum_{j=1}^{m - 1} \sum_{e \in S_j} w_i \left[\D(\overline{P}^{(V_j)} \| \overline{P}^{(V_j \backslash \{e\})} \otimes \overline{P}^{(e)}) - \D(\overline{P}^{(-\mathrm{supp}(\mathbf{V}) \backslash \{e\})} \| \overline{P}^{(-\mathrm{supp}(\mathbf{V}))} \otimes \overline{P}^{(e)})\right] \nonumber\\
    &= f(\mathbf{S}) - \beta + \sum_{j=1}^{m - 1} \sum_{e \in S_j} \left[\D(\overline{P}^{(V_j)} \| \overline{P}^{(V_j \backslash \{e\})} \otimes \overline{P}^{(e)}) - \D(\overline{P}^{(-\mathrm{supp}(\mathbf{V}) \backslash \{e\})} \| \overline{P}^{(-\mathrm{supp}(\mathbf{V}))} \otimes \overline{P}^{(e)})\right],
\end{align}
where the last equality comes from the fact that $\mathbf{w} \in \mathcal{S}_n$.

We also obtain the following modular function:
\begin{align}\label{eq:c_KL_gen}
    c(\mathbf{S}, \mathbf{w}) = -\beta + \sum_{j=1}^{m - 1} \sum_{e \in S_j} \left[\D(\overline{P}^{(V_j)} \| \overline{P}^{(V_j \backslash \{e\})} \otimes \overline{P}^{(e)}) - \D(\overline{P}^{(-\mathrm{supp}(\mathbf{V}) \backslash \{e\})} \| \overline{P}^{(-\mathrm{supp}(\mathbf{V}))} \otimes \overline{P}^{(e)})\right],
\end{align}
where we take 
\begin{align}\label{eq:beta_KL_gen}
    \beta = \beta(\mathbf{w}) \leq - \sum_{j=1}^{m - 1} \sum_{e \in S_j} \left[H(\overline{P}(\mathbf{w})^{(-\mathrm{supp}(\mathbf{V}) \cup \{e\})}) + H(\overline{P}(\mathbf{w})^{(e)})\right]
\end{align}
and write $c(\mathbf{S}, \mathbf{w}) \leq C$ to ensure that $0 \leq c \leq C$. Therefore, for fixed $\mathbf{w} \in \mathcal{S}_n$,
\begin{align*}
    f(\mathbf{S}, \mathbf{w}) = g(\mathbf{S}, \mathbf{w}) - c(\mathbf{S}, \mathbf{w}),
\end{align*}
where $f$ can be written as the difference between a $(m-1)$-submodular function and a non-negative modular function.

\begin{remark}
    If we consider the optimization problem \eqref{eq:max_min_max} with fixed $\mathbf{w} \in \mathcal{S}_n$, i.e., \begin{align*}
        \max_{\mathbf{S} \preceq \mathbf{V};~ |\mathrm{supp}(\mathbf{S})| \leq l} f(\mathbf{S}) = f(\mathbf{S}, \mathbf{w}),
    \end{align*}
    we can apply Algorithm 3 of \cite{lai2025information} with $g$ as in \eqref{eq:g_KL_gen}, $c$ as in \eqref{eq:c_KL_gen}, and $\beta$ as in \eqref{eq:beta_KL_gen} to solve the problem. Furthermore, Theorem 2.11 of \cite{lai2025information} gives the following lower bound: \begin{align*}
        f(\mathbf{S}_l, \mathbf{w}) \geq (1 - e^{-1})g(\mathbf{OPT}, \mathbf{w}) - c(\mathbf{OPT}, \mathbf{w}),
    \end{align*}
    where $\mathbf{S}_l = (S_{l, 1}, \ldots, S_{l, m-1})$ is the final output of Algorithm 3 of \cite{lai2025information} and $\mathbf{OPT} = \argmax_{\mathbf{S} \preceq \mathbf{V};~ |\mathrm{supp}(\mathbf{S})| \leq l} f(\mathbf{S})$.
\end{remark}

\SetKwInput{Input}{Input}
\SetKwInput{Output}{Output}

We propose Algorithm~\ref{alg:max_max} to solve problem \eqref{eq:max_min_max}. Algorithm~\ref{alg:max_max} is a two-layer subgradient-greedy algorithm, which combines the outer generalized distorted greedy algorithm (Algorithm 3 of \cite{lai2025information}) and the inner projected subgradient algorithm (Algorithm~\ref{alg:subg}). Specifically, we conduct totally $l$ rounds of generalized distorted greedy algorithm: at the $i$-th round, we first fix $\mathbf{S}_i$ and apply the projected subgradient algorithm on fixed $\mathbf{S}_i$ for $K$ iterations to maximize the objective function $f(\mathbf{S}_i, \cdot)$; we then fix $\overline{\mathbf{w}}_{i+1} = \sum_{k=1}^K \mathbf{w}_{i+1}^{(k)}$ and perform generalized distorted greedy algorithm to obtain $\mathbf{S}_{i+1}$. We proceed to state and prove a lower bound of Algorithm~\ref{alg:max_max} in Theorem~\ref{thm:max_max_lb}.

\begin{algorithm}[h]
\caption{A two-layer subgradient-greedy algorithm to solve problem \eqref{eq:max_min_max}}
\label{alg:max_max}
\Input{$f$ as in \eqref{eq:f_gen_KL}; $g$ as in \eqref{eq:g_KL_gen}; $c$ as in \eqref{eq:c_KL_gen}; subgradient $\mathbf{g}$ as in Theorem~\ref{thm:subg}; cardinality constraint $l$; partition of ground set $\mathbf{V}=(V_1,\ldots,V_{m - 1})\in m^{\llbracket d \rrbracket}$; inner iteration number $K$.}
\Output{Coordinates $\mathbf{S}_l=(S_{l, 1}, \ldots, S_{l, m - 1})$ and weights $\overline{\mathbf{w}}^{(l)}$.}
Initialize $\mathbf{S}_0=(S_{0,1},\ldots,S_{0,m - 1}) \gets \emptyset$ and $\mathbf{w}^{(K)}_0 = (\frac{1}{m}, \ldots, \frac{1}{m})$. \\
Compute bound $B$ as in Theorem~\ref{thm:subg} and stepsize $\eta = \sqrt{\frac{n}{BK}}$.\\
\For{$i=0$ \KwTo $l-1$}{
  $\mathbf{w}^{(0)}_{i + 1} \gets \mathbf{w}^{(K)}_i$.
  
\For{$k=0$ \KwTo $K-1$}{
  $\mathbf{v} \gets \mathbf{w}^{(k)}_{i + 1} - \eta \cdot \mathbf{g}(\mathbf{S}_i, \mathbf{w}^{(k)}_{i + 1})$. 
  
  $\mathbf{w}^{(k + 1)}_{i + 1} \gets \argmin_{\mathbf{w} \in \mathcal{S}_n} \|\mathbf{w} - \mathbf{v}\|_2^2$.
}  
  $\overline{\mathbf{w}}_{i + 1} \gets \frac{1}{K} \sum_{k=1}^K \mathbf{w}^{(k)}_{i + 1}$.
  
  $(j^*,e^*) \gets \argmax\limits_{\substack{j \in \llbracket m - 1 \rrbracket;\\ e \in V_j \setminus S_{i,j}}}
  \left\{\left(1-\frac{1}{l}\right)^{l-(i+1)} \Delta_{e,j} g(\mathbf{S}_i, \overline{\mathbf{w}}_{i + 1}) - c(\{e\}, \overline{\mathbf{w}}_{i + 1})\right\}$. 
  
  \eIf{$\left(1-\frac{1}{l}\right)^{l-(i+1)} \Delta_{e^*,j^*} g(\mathbf{S}_i, \overline{\mathbf{w}}_{i + 1}) - c(\{e^*\}, \overline{\mathbf{w}}_{i + 1}) > 0$}{
    $S_{i+1,j^*} \gets S_{i,j^*} \cup \{e^*\}$.
  }{
    $S_{i+1,j^*} \gets S_{i,j^*}$.
  }
  \For{$ k \in \llbracket m - 1 \rrbracket,\; k\neq j^*$}{
    $S_{i+1, k} \gets S_{i, k}$.
  }
}
\Return $\mathbf{S}_l$ and $\overline{\mathbf{w}}_l$.
\end{algorithm}

\begin{theorem}[Lower bound of Algorithm~\ref{alg:max_max}]\label{thm:max_max_lb}
    Algorithm~\ref{alg:max_max} provides the following lower bound:
    \begin{align*}
        f(\mathbf{S}_l, \overline{\mathbf{w}}_l) > \frac{1}{l} \sum_{i=1}^l [\alpha_i g(\mathbf{OPT}(\overline{\mathbf{w}}_i), \overline{\mathbf{w}}_i) - c(\mathbf{OPT}(\overline{\mathbf{w}}_i), \overline{\mathbf{w}}_i)] - \mathcal{O} \left(l\left(\sqrt{\frac{nB}{K}} + C\right)\right),
    \end{align*}
    where $(\mathbf{S}_l, \overline{\mathbf{w}}_l)$ is the output of Algorithm~\ref{alg:max_max}, $\alpha_i = (1 - \frac{1}{l})^{l - i}$, and \begin{align*}
        \mathbf{OPT}(\mathbf{w}) = \argmax_{\mathbf{S} \preceq \mathbf{V};~ |\mathrm{supp}(\mathbf{S})| \leq l} f(\mathbf{S}, \mathbf{w}).
    \end{align*}
\end{theorem}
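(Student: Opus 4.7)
The plan is to adapt the distorted greedy analysis underlying Theorem 2.11 of \cite{lai2025information} to the alternating two-layer setting by combining a per-step greedy increment at frozen $\mathbf{w}$ with a drift-accounting argument across outer iterations that absorbs the inner subgradient approximation error from Theorem~\ref{thm:subg_ub}.

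First, I would define the distorted potential $\Phi_i(\mathbf{S}, \mathbf{w}) := \alpha_i\, g(\mathbf{S}, \mathbf{w}) - c(\mathbf{S}, \mathbf{w})$ with $\alpha_i := (1 - 1/l)^{l-i}$, so that $\Phi_l(\mathbf{S}, \mathbf{w}) = g(\mathbf{S}, \mathbf{w}) - c(\mathbf{S}, \mathbf{w}) = f(\mathbf{S}, \mathbf{w})$. Freezing $\mathbf{w} = \overline{\mathbf{w}}_i$ at outer iteration $i$, I would apply the single-iteration argument of \cite{lai2025information} to the monotone $(m-1)$-submodular $g(\cdot, \overline{\mathbf{w}}_i)$ in \eqref{eq:g_KL_gen} and the modular $c(\cdot, \overline{\mathbf{w}}_i)$ in \eqref{eq:c_KL_gen}, obtaining the per-step inequality
$$\Phi_i(\mathbf{S}_i, \overline{\mathbf{w}}_i) - \Phi_{i-1}(\mathbf{S}_{i-1}, \overline{\mathbf{w}}_i) \geq \frac{1}{l}\bigl[\alpha_i\, g(\mathbf{OPT}(\overline{\mathbf{w}}_i), \overline{\mathbf{w}}_i) - c(\mathbf{OPT}(\overline{\mathbf{w}}_i), \overline{\mathbf{w}}_i)\bigr].$$
To telescope across $i$, I would split
$$\Phi_i(\mathbf{S}_i, \overline{\mathbf{w}}_i) - \Phi_{i-1}(\mathbf{S}_{i-1}, \overline{\mathbf{w}}_{i-1}) = \bigl[\Phi_i(\mathbf{S}_i, \overline{\mathbf{w}}_i) - \Phi_{i-1}(\mathbf{S}_{i-1}, \overline{\mathbf{w}}_i)\bigr] + \bigl[\Phi_{i-1}(\mathbf{S}_{i-1}, \overline{\mathbf{w}}_i) - \Phi_{i-1}(\mathbf{S}_{i-1}, \overline{\mathbf{w}}_{i-1})\bigr]$$
and bound the second bracket via (i) Theorem~\ref{thm:subg_ub} with stepsize $\eta = \sqrt{n/(BK)}$, which gives an $\mathcal{O}(\sqrt{nB/K})$ near-optimality of $\overline{\mathbf{w}}_i$ for $f(\mathbf{S}_{i-1}, \cdot)$, and (ii) the uniform bound $0 \leq c \leq C$ arising from the choice of $\beta$ in \eqref{eq:beta_KL_gen}, which controls the $c$-variation across $\mathbf{w}$. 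Using the decomposition $g = f + c$, each weight-drift term is then $\mathcal{O}(\sqrt{nB/K} + C)$.

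Summing the telescoped increments from $i = 1$ to $l$ collapses the left side to $\Phi_l(\mathbf{S}_l, \overline{\mathbf{w}}_l) - \Phi_0(\mathbf{S}_0, \overline{\mathbf{w}}_0) = f(\mathbf{S}_l, \overline{\mathbf{w}}_l) - \Phi_0(\emptyset, \overline{\mathbf{w}}_0)$, and absorbs the $l$ drift errors into an additive $\mathcal{O}(l(\sqrt{nB/K} + C))$ term on the right, yielding the claim. The main obstacle is the drift accounting: since $\overline{\mathbf{w}}_{i-1}$ is obtained by near-maximizing $f(\mathbf{S}_{i-2}, \cdot)$ rather than $f(\mathbf{S}_{i-1}, \cdot)$, controlling $f(\mathbf{S}_{i-1}, \overline{\mathbf{w}}_{i-1}) - f(\mathbf{S}_{i-1}, \overline{\mathbf{w}}_i)$ cleanly requires exploiting the warm-start $\mathbf{w}^{(0)}_{i+1} = \mathbf{w}^{(K)}_i$ in Algorithm~\ref{alg:max_max} together with concavity from item~\eqref{it:L_conc} of Theorem~\ref{thm:equiv}; a pragmatic alternative is a looser bound that uses only uniform boundedness of $c$ combined with the subgradient convergence rate, which suffices to keep the per-step error at $\mathcal{O}(\sqrt{nB/K} + C)$ without hidden $l$-dependence.
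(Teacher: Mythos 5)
Your proposal follows essentially the same route as the paper's proof: the same distorted potential $\Phi_i = \alpha_i g - c$, the same decomposition of each outer increment into a distorted-greedy gain (bounded via Theorem 2.11 of \cite{lai2025information}) plus a weight-update error (bounded via Theorem~\ref{thm:subg_ub} together with $0 \le c \le C$), and the same telescoping from $\Phi_0(\mathbf{S}_0)\ge 0$. You even explicitly flag the drift subtlety — that $\overline{\mathbf{w}}_i$ is near-optimal for $f(\mathbf{S}_{i-1},\cdot)$ rather than $f(\mathbf{S}_i,\cdot)$ — which the paper's proof passes over by asserting the $\sqrt{nB/K}$ bound uniformly for all $\overline{\mathbf{w}}_m$.
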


\begin{proof}
We define the distorted objective function $\Phi_i: m^{\llbracket d \rrbracket} \times \mathcal{S}_n \to \mathbb{R}$ to be \begin{align*}
    \Phi_i(\mathbf{S}, \overline{\mathbf{w}}_i) := \alpha_i g(\mathbf{S}, \overline{\mathbf{w}}_i) - c(\mathbf{S}, \overline{\mathbf{w}}_i) > \alpha_i f(\mathbf{S}, \overline{\mathbf{w}}_i) - c(\mathbf{S}, \overline{\mathbf{w}}_i),
\end{align*}
where the inequality comes from the fact that $0 < \alpha_i \leq 1$.

We look into the difference of the distorted objective function
\begin{align*}
    \Phi_{i+1} (\mathbf{S}_{i + 1}, \overline{\mathbf{w}}_{i+1}) - \Phi_i(\mathbf{S}_i, \overline{\mathbf{w}}_{i}) = [\Phi_{i + 1}(\mathbf{S}_{i+1}, \overline{\mathbf{w}}_{i+1}) - \Phi_i (\mathbf{S}_i, \overline{\mathbf{w}}_{i+1})] - [\Phi_i (\mathbf{S}_i, \overline{\mathbf{w}}_{i+1}) - \Phi_i(\mathbf{S}_i, \overline{\mathbf{w}}_{i})],
\end{align*}
where the first term is the gain in the distorted greedy algorithm, and the second term is the weight update error.

We first refer to the proof of Theorem 2.11 of \cite{lai2025information} and state the lower bound of the gain in the distorted greedy algorithm \begin{align*}
    \Phi_{i + 1}(\mathbf{S}_{i+1}, \overline{\mathbf{w}}_{i+1}) - \Phi_i (\mathbf{S}_i, \overline{\mathbf{w}}_{i+1}) \geq \frac{1}{l} (\alpha_{i+1}g(\mathbf{OPT}(\overline{\mathbf{w}}_{i+1}), \overline{\mathbf{w}}_{i+1}) - c(\mathbf{OPT}(\overline{\mathbf{w}}_{i+1}), \overline{\mathbf{w}}_{i+1})).
\end{align*}
We then analyze the weight update error term. From Theorem~\ref{thm:subg_ub}, we have \begin{align*}
    f(\mathbf{S}_i, \mathbf{w}^*(\mathbf{S}_i)) - f(\mathbf{S}_i, \overline{\mathbf{w}}_m) \leq \sqrt{\frac{nB}{K}},~ \forall m\in \llbracket l \rrbracket.
\end{align*}
hence the lower bound of the weight update error is  \begin{align*}
    \Phi_i (\mathbf{S}_i, \overline{\mathbf{w}}_{i+1}) - \Phi_i(\mathbf{S}_i, \overline{\mathbf{w}}_{i}) &= \alpha_i(f(\mathbf{S}_i, \overline{\mathbf{w}}_{i+1}) - f(\mathbf{S}_i, \overline{\mathbf{w}}_i)) - (c(\mathbf{S}_i, \overline{\mathbf{w}}_{i+1}) - c(\mathbf{S}_i, \overline{\mathbf{w}}_i)) \\
    &> -\alpha_i \|f(\mathbf{S}_i, \overline{\mathbf{w}}_{i+1}) - f(\mathbf{S}_i, \overline{\mathbf{w}}_i)\| - C\\
    &\geq -\alpha_i (\|f(\mathbf{S}_i, \mathbf{w}^*(\mathbf{S}_i)) - f(\mathbf{S}_i, \overline{\mathbf{w}}_{i+1})\| + \|f(\mathbf{S}_i, \mathbf{w}^*(\mathbf{S}_i)) - f(\mathbf{S}_i, \overline{\mathbf{w}}_i)\|) - C\\
    &\geq -2\alpha_i \sqrt{\frac{nB}{K}} - C.
\end{align*}

Since $\Phi_0(\mathbf{S}_0) \geq 0$, then $$f(\mathbf{S}_l, \overline{\mathbf{w}}_l) = \alpha_l \cdot g(\mathbf{S}_l, \overline{\mathbf{w}}_i) - c(\mathbf{S}_l, \overline{\mathbf{w}}_i) \geq \sum_{i=0}^{l - 1} [\Phi_{i + 1} (\mathbf{S}_{i + 1}) - \Phi_i (\mathbf{S}_i)], $$ hence
\begin{align*}
    f(\mathbf{S}_l, \overline{\mathbf{w}}_l) &\geq \sum_{i=0}^{l-1} [\Phi_{i + 1}(\mathbf{S}_{i+1}, \overline{\mathbf{w}}_{i+1}) - \Phi_i (\mathbf{S}_i, \overline{\mathbf{w}}_{i+1})] + \sum_{i=0}^{l-1} [\Phi_i (\mathbf{S}_i, \overline{\mathbf{w}}_{i+1}) - \Phi_i(\mathbf{S}_i, \overline{\mathbf{w}}_{i})] \\
    &> \frac{1}{l} \sum_{i=1}^l [\alpha_i g(\mathbf{OPT}(\overline{\mathbf{w}}_i), \overline{\mathbf{w}}_i) - c(\mathbf{OPT}(\overline{\mathbf{w}}_i), \overline{\mathbf{w}}_i)] - 2\sqrt{\frac{nB}{K}} \sum_{i=0}^{l-1} \alpha_i - lC\\
    &= \frac{1}{l} \sum_{i=1}^l [\alpha_i g(\mathbf{OPT}(\overline{\mathbf{w}}_i), \overline{\mathbf{w}}_i) - c(\mathbf{OPT}(\overline{\mathbf{w}}_i), \overline{\mathbf{w}}_i)] - \mathcal{O} \left(l\left(\sqrt{\frac{nB}{K}} + C\right)\right).
\end{align*}
\end{proof}

\section[Numerical Experiments]{Numerical experiments\footnote{The code is available at: \href{https://github.com/zheyuanlai/subgradient-greedy/}{\texttt{https://github.com/zheyuanlai/subgradient-greedy}}.}}\label{sec:num}
We conduct a series of numerical experiments to validate the theoretical framework and evaluate the performance of the proposed algorithms. The experiments are designed to demonstrate the performance of the projected subgradient algorithm (Algorithm~\ref{alg:subg}) to solve problem \eqref{eq:subg} and the two-layer subgradient-greedy algorithm (Algorithm~\ref{alg:max_max}) to solve problem \eqref{eq:max_min_max} on the multivariate Markov chains associated with the Curie-Weiss model and the Bernoulli-Laplace level model.

\subsection{Experiment settings}
\subsubsection{Curie-Weiss model}
We aim to generate a $d$-dimensional Markov chain from the Curie-Weiss model. We consider a discrete $d$-dimensional hypercube state space given by
\begin{align*}
    \mathcal{X} = \{-1,+1\}^{d}.
\end{align*}
Let the Hamiltonian function be that of the Curie-Weiss model (see Chapter 13 of~\cite{bovier2016metastability}) on $\mathcal{X}$ with interaction coefficients $\frac{1}{2^{|j-i|}}$ and external magnetic field $h=1$, that is, for $x = (x^1, \ldots, x^{d}) \in \mathcal{X}$,
\begin{align*}
    \mathcal{H}(x) = - \sum_{i=1}^{d} \sum_{j=1}^{d} \dfrac{1}{2^{|j-i|}} x^i x^j - h \sum_{i=1}^d x^i.
\end{align*}
We consider a Glauber dynamics with a simple random walk proposal targeting the Gibbs distribution at temperature $T=10$. At each step we pick uniformly at random one of the $d$ coordinates and flip it to the opposite sign, along with an acceptance-rejection filter, that is,
\begin{align*}
    P(x, y) = \begin{cases}
\dfrac{1}{d} e^{-\frac{1}{T} (\mathcal{H}(y) - \mathcal{H}(x))_+}, & \text{if } y = (x^1,x^2,\ldots,-x^i,\ldots,x^d), i \in \llbracket d \rrbracket, \\
1 - \sum_{y;~y \neq x} P(x, y), & \text{if } x = y, \\
0, & \text{otherwise},
\end{cases}
\end{align*}
where for $m \in \mathbb{R}$ we denote $m_+ := \max\{m,0\}$ as the non-negative part of $m$. The stationary distribution of $P$ is the Gibbs distribution at temperature $T$ given by
\begin{align*}
    \pi(x) = \dfrac{e^{-\frac{1}{T} \mathcal{H}(x)}}{\sum_{z \in \mathcal{X}} e^{-\frac{1}{T} \mathcal{H}(z)}}.
\end{align*}

\subsubsection{Bernoulli-Laplace level model}
We aim to generate a $d$-dimensional Markov chain from the Bernoulli-Laplace level model. We consider a $(d+1)$-dimensional Bernoulli–Laplace level model as described in Section 4.2 of~\cite{khare2009rates}. Let \begin{align*}
    \mathcal{X} = \{x = (x^1, \ldots, x^{d+1}) \in \mathbb{N}_0^{d+1};~ x^1 + \ldots + x^{d+1} = N\}
\end{align*}
be the state space, where $x^i$ can be interpreted as the number of ``particles'' of type $i$ out of the total number $N=d$. The stationary distribution of such Markov chain, $\pi$, is given by the multivariate hypergeometric distribution described in Lemma 4.18 of~\cite{khare2009rates}. Concretely, we have
\begin{align*}
    \pi(x) = \frac{\prod_{i=1}^{d+1} {l_i \choose x^i}}{{l_1 + \ldots + l_{d+1} \choose N}}, \quad x \in \mathcal{X}, 
\end{align*}
for some fixed parameters $l_1 = \ldots = l_{d} = 1$ and $l_{d+1} = d$, which represents the total number of ``particles'' of type $i$. Under this setting, we let $x^{d+1} = N - \sum_{i=1}^{d} x^i$, and hence the state space is of product form with $\mathcal{X} = \{0, 1\}^{d}$.

Following the spectral decomposition for reversible Markov chains (see Section 2.1 of \cite{khare2009rates} for background), the transition matrix $P$ is written as:
\begin{align*}
    P(x, y) = \sum_{n=0}^N \beta_n \phi_n(x) \phi_n (y) \pi(y),
\end{align*}
where $\beta_n$ are the eigenvalues and $\phi_n(x)$ is the associated eigenfunction.

From Definition 4.15 of~\cite{khare2009rates}, in the Bernoulli-Laplace level model, we choose $s=1$ as the swap size parameter satisfying $$0 \leq s \leq \min \left\{N, \sum_{i=1}^{d+1} l_i - N\right\},$$ where we consider $\sum_{i=1}^{d+1} l_i > N$. From Theorem 4.19 of~\cite{khare2009rates}, the eigenvalues for the Bernoulli-Laplace level model are given by
\begin{align*}
    \beta_n = \sum_{k=0}^n {n \choose k} \frac{(N - s)_{[n-k]} s_{[k]}}{N_{[n-k]} \left(\sum_{i=1}^{d+1} l_i - N\right)_{[k]}}, \quad 0\leq n \leq N,
\end{align*}
where $a_{[k]} = a (a-1) \cdots (a - k + 1)$, and we apply the convention that $a_{[0]} = 1$. 

In this case, we choose the eigenfunction as \begin{align*}
    \phi_n (x) = \left\{\mathbf{Q_n}\left(x; N, -\sum_{i=1}^{d+1} l_i\right)\right\}_{|\mathbf{n}| = n},
\end{align*}
where $\mathbf{Q_n}$ are the multivariate Hahn polynomials for the hypergeometric distribution as defined in Proposition 2.3 of~\cite{khare2009rates}.

\subsection{Numerical experiments of Algorithm~\ref{alg:subg}}\label{subsec:num_subg}
We apply the projected subgradient algorithm (Algorithm~\ref{alg:subg}) to solve problem \eqref{eq:subg} for both the Curie-Weiss and Bernoulli-Laplace level models. We start with a low-dimensional example. For both settings, we construct a 5-dimensional Markov chain with $\pi$-stationary transition probability matrix $P$ on state space $\mathcal{X} = \{0, 1\}^5$. We then construct a family of $n=5$ transition matrices with $\mathcal{B} = \{P, P^2, P^4, P^8, P^{16}\}$, which ensures that all matrices in $\mathcal{B}$ share the same stationary distribution $\pi$. We partition the state space into $\mathbf{S} = \{S_1, S_2, S_3\}$ ($m=3$) such that $S_1=\{1,2\}$, $S_2=\{3,5\}$, and $S_3=\{4\}$.

We initialize the algorithm with uniform weights $\mathbf{w}^{(0)} = (1/5, \ldots, 1/5)$. The step size is chosen according to the theoretical guarantee from Theorem~\ref{thm:subg_ub}, $\eta = \sqrt{\frac{n}{Bt}}$, where the subgradient norm bound $B$ is estimated once at the beginning of the algorithm. The number of iterations until convergence is theoretically determined by $t = \lceil \frac{nB}{\epsilon^2}\rceil$, but $t$ would be large with large $B$ and small $\epsilon$. Therefore for practical purpose, we only run a small number of iterations for demonstration. The trajectory plots of the projected subgradient algorithm and the evolution of weights of both models are shown in Figure~\ref{fig:psg_5}. We also summarize the weights $\mathbf{w} \in \mathcal{S}_n$ and the corresponding objective value $h(\mathbf{w})$ in Table~\ref{tab:h_5} for both Curie-Weiss and Bernoulli-Laplace models. We state and compare the optimal $\mathbf{w}$ during the optimization process $\argmin_{i \in \llbracket t \rrbracket} h(\mathbf{w}^{(i)})$, the averaged value during the iterations $\overline{\mathbf{w}}^t$, initial uniform $\mathbf{w}^{(0)}$, extreme weight $\mathbf{w}_\mathrm{ex}$ such that only $\mathbf{w}_{\mathrm{ex}, 0} = 1$, and the final weight $\mathbf{w}^{(t)}$ of the iterations.

\begin{figure}[H]
    \centering
    \begin{subfigure}[b]{0.48\textwidth}
        \includegraphics[width=\textwidth]{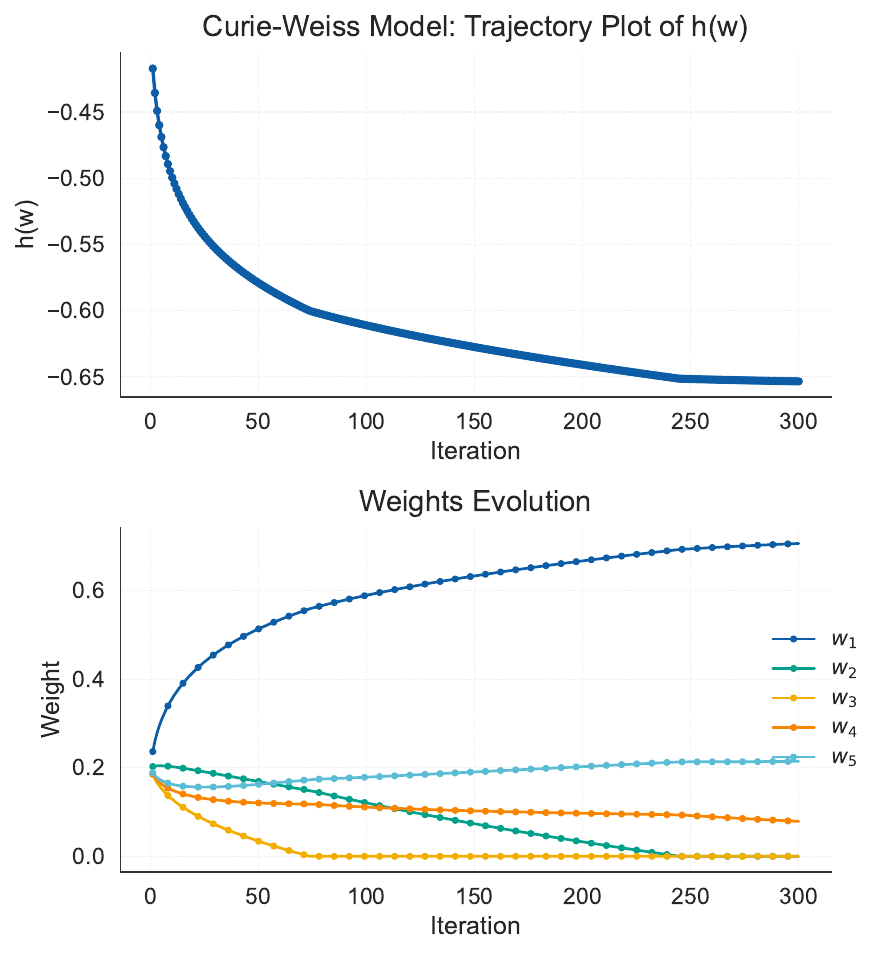}
        \caption{Curie-Weiss model}
        \label{fig:psg_cw_5}
    \end{subfigure}
    \hfill
    \begin{subfigure}[b]{0.48\textwidth}
        \includegraphics[width=\textwidth]{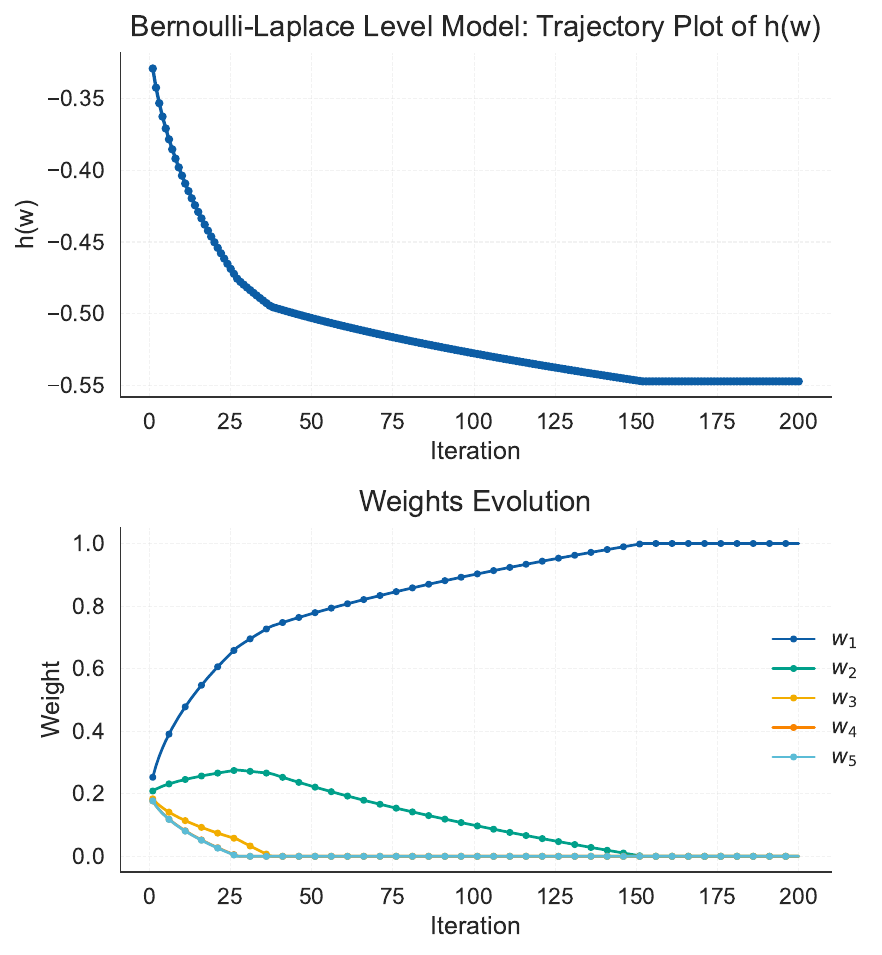}
        \caption{Bernoulli-Laplace level model}
        \label{fig:psg_bl_5}
    \end{subfigure}
    \caption{Convergence of the projected subgradient algorithm for both models ($d=5$).}
    \label{fig:psg_5}
\end{figure}

\begin{table}[h]
\centering
\small
\begin{tabular}{l|cc}
\hline
$\mathbf{w}$, $h(\mathbf{w})$ / \textbf{Model} & \textbf{Curie-Weiss} & \textbf{Bernoulli-Laplace} \\
\hline
$\argmin_{i \in \llbracket t \rrbracket} h(\mathbf{w}^{(i)})$ & $(0.71, 0.00, 0.00, 0.08, 0.21)$ & $(1.00, 0.00, 0.00, 0.00, 0.00)$ \\
$\overline{\mathbf{w}}^t$ & $(0.60, 0.08, 0.02, 0.11, 0.19)$ & $(0.85, 0.11, 0.02, 0.01, 0.01)$ \\
$\mathbf{w}^{(0)}$ & $(0.20, 0.20, 0.20, 0.20, 0.20)$ & $(0.20, 0.20, 0.20, 0.20, 0.20)$ \\
$\mathbf{w}_\mathrm{ex}$ & $(1.00, 0.00, 0.00, 0.00, 0.00)$ & $(1.00, 0.00, 0.00, 0.00, 0.00)$ \\
$\mathbf{w}^{(t)}$ & $(0.71, 0.00, 0.00, 0.08, 0.21)$ & $(1.00, 0.00, 0.00, 0.00, 0.00)$ \\
\hline
$\min_{i \in \llbracket t \rrbracket} h(\mathbf{w}^{(i)})$ & $-0.65$ & $-0.55$ \\
$h(\overline{\mathbf{w}}^t)$ & $-0.62$ & $-0.51$ \\
$h(\mathbf{w}^{(0)})$ & $-0.39$ & $-0.31$ \\
$h(\mathbf{w}_\mathrm{ex})$ & $-0.48$ & $-0.55$ \\
$h(\mathbf{w}^{(t)})$ & $-0.65$ & $-0.55$ \\
\hline
\end{tabular}
\caption{Comparison of $h(\mathbf{w})$ values for different weight choices ($d=5$)}\label{tab:h_5}
\end{table}

For the Curie-Weiss model (Figure~\ref{fig:psg_cw_5}), the algorithm demonstrates rapid initial decrease, after the first 50 iterations, the objective value decreases with a slower rate, which totally converges after 250 iterations. The weights converge to a sparse distribution, with the final weight vector being approximately $\mathbf{w}^{(t)} = (0.71, 0.00, 0.00, 0.08, 0.21)$. This indicates that the final solution is approximately a convex combination of the base transition matrix $P$ and the transition matrix with the highest mixing rate $P^{16}$, while the intermediate transition matrices have zero weights.

The Bernoulli-Laplace level model (Figure~\ref{fig:psg_bl_5}) exhibits similar convergence behavior: the objective value decreases fast in the first 30 steps, then it moves slowly until fully converged after 150 iterations. The final weight vector converges to $\mathbf{w}^{(t)} =(1.00, 0.00, 0.00, 0.00, 0.00)$, indicating that the optimal solution is entirely the base transition matrix $P$.

We then conduct experiments associated with the family of transition matrices including lazy Markov chain (see e.g. \cite{shen2014lazy} for background). Precisely, we choose 

\begin{align*}
    \mathcal{B} = \left\{P, P^2, P^4, \frac{1}{4} I + \frac{3}{4} P, \frac{1}{2} (I+P), \frac{3}{4} I + \frac{1}{4} P \right\},
\end{align*}
where one readily verifies that all the transition matrices in family $\mathcal{B}$ share the same stationary distribution $\pi$. The trajectory plots are shown in Figure~\ref{fig:psg_lazy}, and we also summarize the objective values of different $\mathbf{w}$'s in Table~\ref{tab:h_5_lazy}.

\begin{figure}[H]
    \centering
    \begin{subfigure}[b]{0.48\textwidth}
        \includegraphics[width=\textwidth]{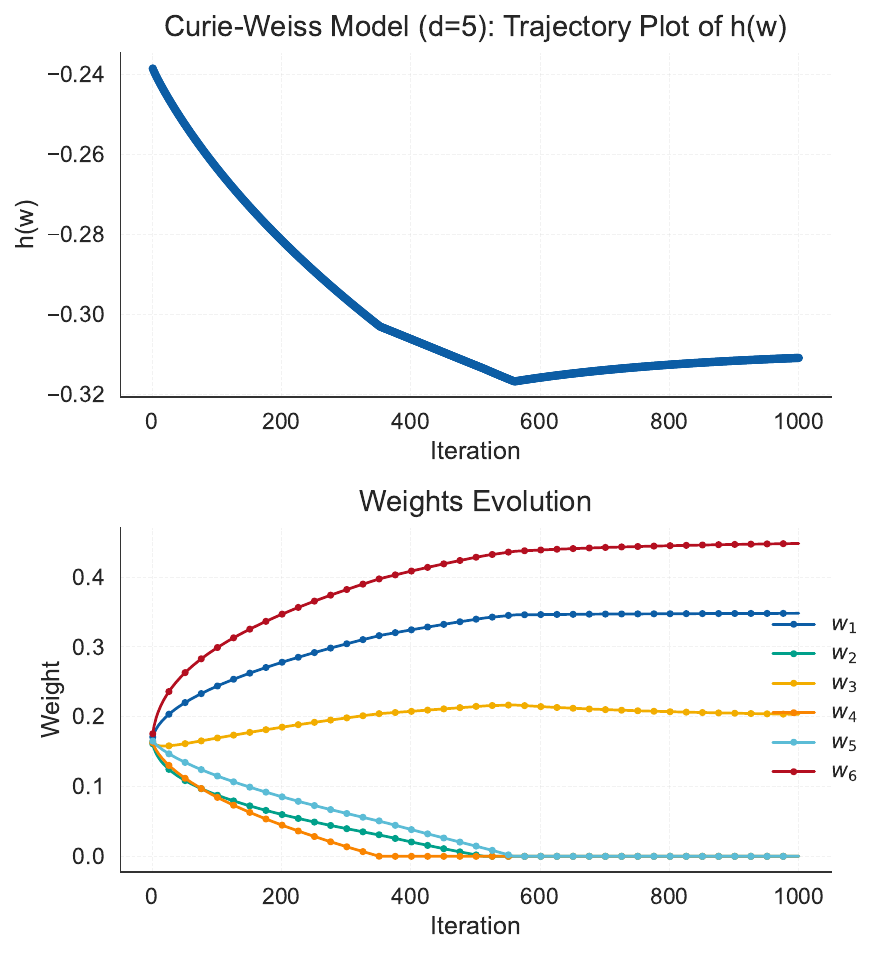}
        \caption{Curie-Weiss model}
        \label{fig:psg_cw_lazy}
    \end{subfigure}
    \hfill
    \begin{subfigure}[b]{0.48\textwidth}
        \includegraphics[width=\textwidth]{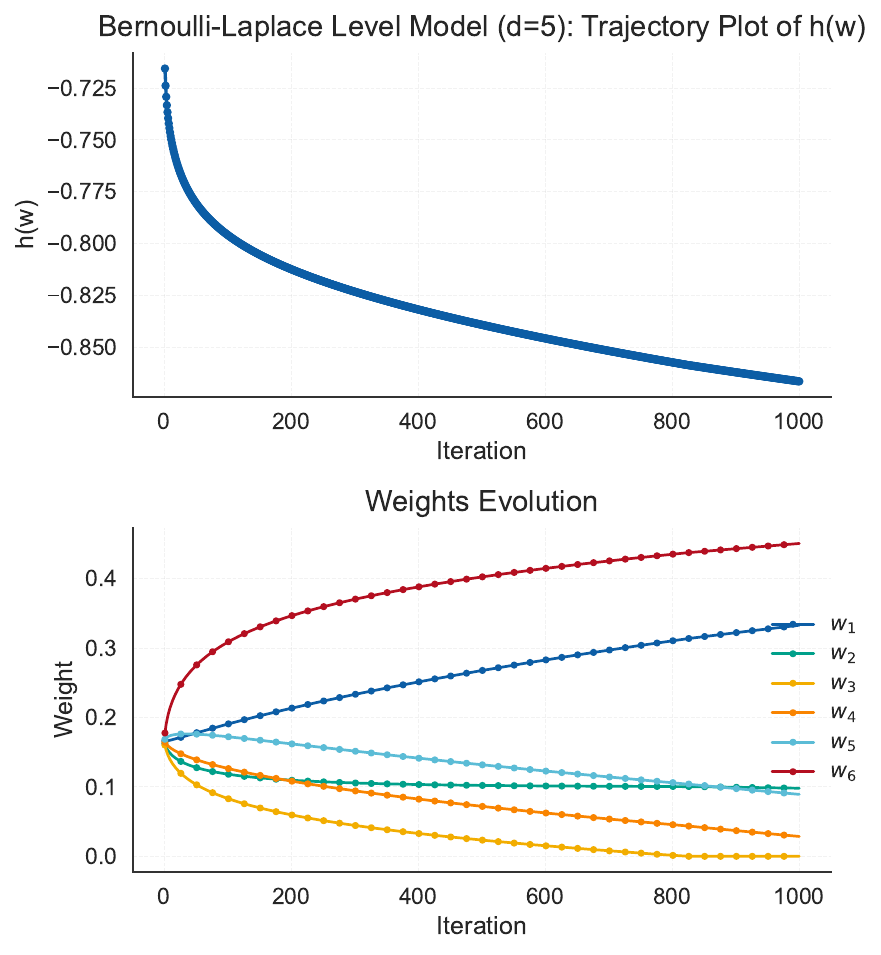}
        \caption{Bernoulli-Laplace level model}
        \label{fig:psg_bl_lazy}
    \end{subfigure}
    \caption{Trajectory plot of the projected subgradient algorithm for both models (incl. lazy chains).}
    \label{fig:psg_lazy}
\end{figure}

\begin{table}[h]
\centering
\small
\begin{tabular}{l|cc}
\hline
$\mathbf{w}$, $h(\mathbf{w})$ / \textbf{Model} & \textbf{Curie-Weiss} & \textbf{Bernoulli-Laplace} \\
\hline
$\argmin_{i \in \llbracket t \rrbracket} h(\mathbf{w}^{(i)})$ & $(0.35, 0.00, 0.22, 0.00, 0.00, 0.44)$ & $(0.33, 0.10, 0.00, 0.03, 0.09, 0.45)$ \\
$\overline{\mathbf{w}}^t$ & $(0.32, 0.03, 0.20, 0.02, 0.04, 0.40)$ & $(0.26, 0.11, 0.03, 0.08, 0.13, 0.39)$ \\
$\mathbf{w}^{(0)}$ & $(0.17, 0.17, 0.17, 0.17, 0.17, 0.17)$ & $(0.17, 0.17, 0.17, 0.17, 0.17, 0.17)$ \\
$\mathbf{w}_\mathrm{ex}$ & $(1.00, 0.00, 0.00, 0.00, 0.00, 0.00)$ & $(1.00, 0.00, 0.00, 0.00, 0.00, 0.00)$ \\
$\mathbf{w}^{(t)}$ & $(0.35, 0.00, 0.20, 0.00, 0.00, 0.45)$ & $(0.33, 0.10, 0.00, 0.03, 0.09, 0.45)$ \\
\hline
$\min_{i \in \llbracket t \rrbracket} h(\mathbf{w}^{(i)})$ & $-0.32$ & $-0.87$ \\
$h(\overline{\mathbf{w}}^t)$ & $-0.34$ & $-0.31$ \\
$h(\mathbf{w}^{(0)})$ & $-0.28$ & $-0.29$ \\
$h(\mathbf{w}_\mathrm{ex})$ & $-0.29$ & $-0.55$ \\
$h(\mathbf{w}^{(t)})$ & $-0.31$ & $-0.87$ \\
\hline
\end{tabular}
\caption{Comparison of $h(\mathbf{w})$ values for different weight choices (incl. lazy chains)}
\label{tab:h_5_lazy}
\end{table}

For the Curie-Weiss model (Figure~\ref{fig:psg_cw_lazy}), the algorithm exhibits an initial decrease followed by a slight increase towards convergence. Since the projected subgradient algorithm (Algorithm~\ref{alg:subg}) is not a descent algorithm, then it is not guaranteed that $h$ shows a non-decreasing trajectory. The final objective value reaches approximately $-0.311$, while the final weight learned by the algorithm is
\[
\mathbf{w}^{(t)} = \Big(
\underbrace{0.35}_{P},\;
\underbrace{0.00}_{P^2},\;
\underbrace{0.20}_{P^4},\;
\underbrace{0.00}_{\frac{1}{4}I+\frac{3}{4}P},\;
\underbrace{0.00}_{\frac{1}{2}(I+P)},\;
\underbrace{0.45}_{\frac{3}{4}I+\frac{1}{4}P}
\Big),
\]
which is sparse and concentrates on three extremes: the base chain $P$, the most accelerated $P^4$, and the ``laziest'' member $\tfrac{3}{4}I+\tfrac{1}{4}P$. Intermediate options ($P^2$ and the moderately lazy mixtures) receive zero weight. This indicates that, within this family on the Curie-Weiss chain, the best trade-off for the minimax optimization is achieved by combining the slowest $\tfrac{3}{4}I+\tfrac{1}{4}P$ and fastest $P^4$ directions with the base chain $P$.

For the Bernoulli–Laplace level model (Figure~\ref{fig:psg_bl_lazy}), we similarly observe rapid early descent and a stable plateau thereafter as in Figure~\ref{fig:psg_bl_5}. The final objective is approximately $-0.866$ though has not reached convergence given the limited computational budget. The final weight is \[
\mathbf{w}^{(t)} = \Big(
\underbrace{0.33}_{P},\;
\underbrace{0.10}_{P^2},\;
\underbrace{0.00}_{P^4},\;
\underbrace{0.03}_{\frac{1}{4}I+\frac{3}{4}P},\;
\underbrace{0.09}_{\frac{1}{2}(I+P)},\;
\underbrace{0.45}_{\frac{3}{4}I+\frac{1}{4}P}
\Big),
\]
which gives majority of weight on the base transition matrix $P$ and the transition matrix associated with the most ``lazy'' chain $\frac{3}{4}I + \frac{1}{4}P$. This indicates that, within this family on the Bernoulli-Laplace chains, the best trade-off for the minimax optimization is achieved by combining the slowest direction $\tfrac{3}{4}I+\tfrac{1}{4}P$ and $P^2$ direction with the base chain $P$.

We proceed to simulate on higher-dimensional Markov chains associated with both models, with results presented in Figure~\ref{fig:psg_hd}. For these experiments, the family of transition matrices is $\mathcal{B} = \{P, P^2, P^4, P^8, P^{16}\}$ ($n=5$).  For the Bernoulli-Laplace level model, we conduct experiments on $d=10$, while for the Curie-Weiss model, we only choose $d=8$ in order to avoid numerical overflow. We also summarize the objective values of different $\mathbf{w}$'s in Table~\ref{tab:h_hd}.

\begin{figure}[H]
    \centering
    \begin{subfigure}[b]{0.49\textwidth}
        \includegraphics[width=\textwidth]{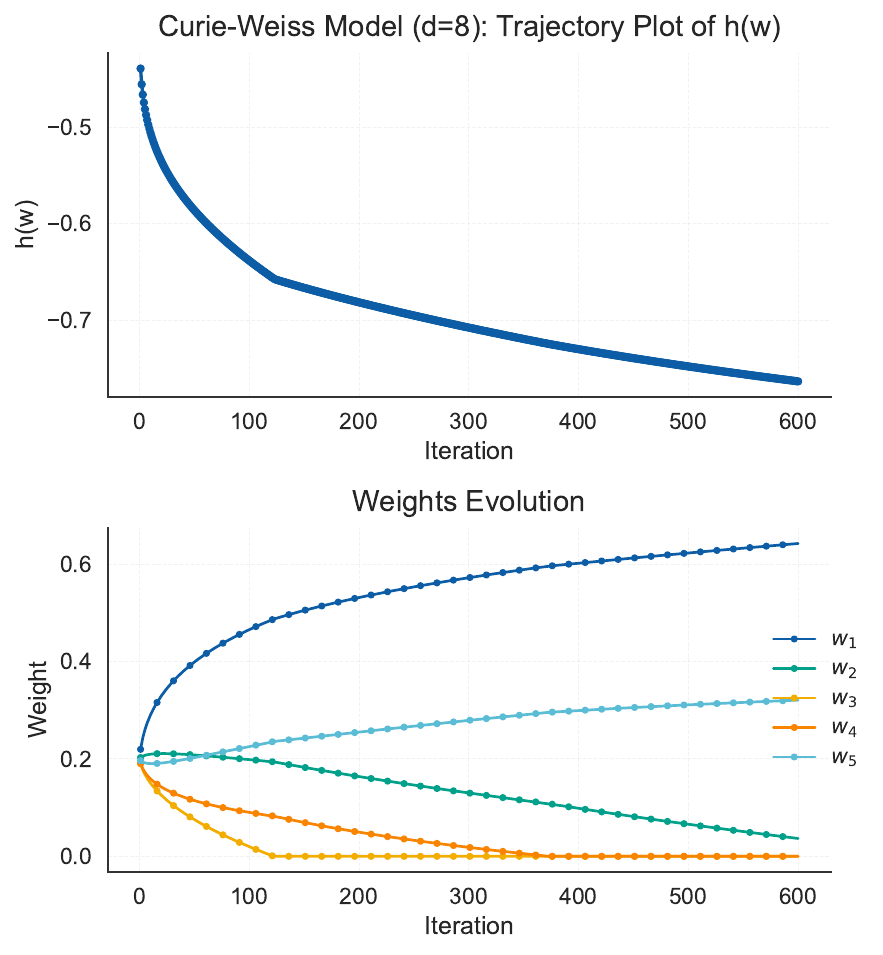}
        \caption{Curie-Weiss model ($d=8$)}
        \label{fig:psg_CW_8}
    \end{subfigure}
    \hfill
    \begin{subfigure}[b]{0.49\textwidth}
        \includegraphics[width=\textwidth]{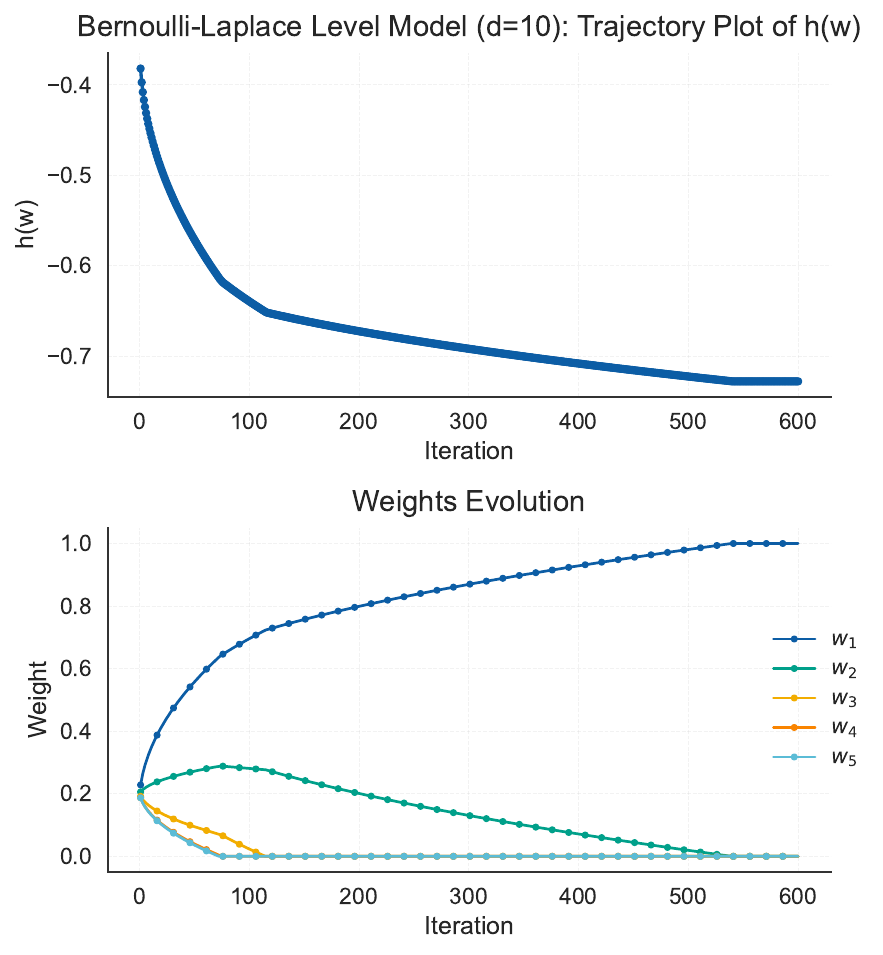}
        \caption{Bernoulli-Laplace level model ($d=10$)}
        \label{fig:psg_BL_10}
    \end{subfigure}
    \caption{Trajectory plots of the projected subgradient algorithm for both models (higher dimension).}
    \label{fig:psg_hd}
\end{figure}

\begin{table}[h]
\centering
\small
\begin{tabular}{l|cc}
\hline
$\mathbf{w}$, $h(\mathbf{w})$ / \textbf{Model} & \textbf{Curie-Weiss} & \textbf{Bernoulli-Laplace} \\
\hline
$\argmin_{i \in \llbracket t \rrbracket} h(\mathbf{w}^{(i)})$ & $(0.64, 0.04, 0.00, 0.00, 0.32)$ & $(1.00, 0.00, 0.00, 0.00, 0.00)$ \\
$\overline{\mathbf{w}}^t$ & $(0.55, 0.13, 0.01, 0.04, 0.27)$ & $(0.83, 0.14, 0.02, 0.01, 0.01)$ \\
$\mathbf{w}^{(0)}$ & $(0.20, 0.20, 0.20, 0.20, 0.20)$ & $(0.20, 0.20, 0.20, 0.20, 0.20)$ \\
$\mathbf{w}_\mathrm{ex}$ & $(1.00, 0.00, 0.00, 0.00, 0.00)$ & $(1.00, 0.00, 0.00, 0.00, 0.00)$ \\
$\mathbf{w}^{(t)}$ & $(0.64, 0.04, 0.00, 0.00, 0.32)$ & $(1.00, 0.00, 0.00, 0.00, 0.00)$ \\
\hline
$\min_{i \in \llbracket t \rrbracket} h(\mathbf{w}^{(i)})$ & $-0.76$ & $-0.73$ \\
$h(\overline{\mathbf{w}}^t)$ & $-0.69$ & $-0.67$ \\
$h(\mathbf{w}^{(0)})$ & $-0.44$ & $-0.38$ \\
$h(\mathbf{w}_\mathrm{ex})$ & $-0.27$ & $-0.73$ \\
$h(\mathbf{w}^{(t)})$ & $-0.76$ & $-0.73$ \\
\hline
\end{tabular}
\caption{Comparison of $h(\mathbf{w})$ values for different weight choices (higher dimension)}
\label{tab:h_hd}
\end{table}

The experiments associated with the Bernoulli-Laplace level model (Figure~\ref{fig:psg_BL_10}) exhibit similar trends as the 5-dimensional example (Figure~\ref{fig:psg_bl_5}), as the objective value $h(\mathbf{w})$ decreases fast at start and then converges slower towards $\mathbf{w}^{(t)} = (1.00, 0.00, 0.00, 0.00, 0.00)$. For the Curie-Weiss model, the 8-dimensional example (Figure~\ref{fig:psg_CW_8}) shows similar convergence trend as the 5-dimensional example (Figure~\ref{fig:psg_cw_5}). However, as the $B$ in Theorem~\ref{thm:subg_ub} is large, we do not obtain the exact converging $\mathbf{w}^*$ with the same computational budget as the Bernoulli-Laplace model.

\subsection{Numerical experiments of Algorithm~\ref{alg:max_max}}

We apply Algorithm~\ref{alg:max_max} to solve problem \eqref{eq:max_min_max} on both the Curie-Weiss and Bernoulli-Laplace models. For both models, we construct a 5-dimensional Markov chain with state space $\mathcal{X} = \{0, 1\}^5$ and $\pi$-stationary transition matrix $P$. We then construct $\mathcal{B} = \{P, P^2, P^4, P^8, P^{16}\}$ so that all matrices in $\mathcal{B}$ share the same stationary distribution $\pi$. We choose the ground set to be $\mathbf{V} = \{V_1, V_2\}$ such that $V_1 = \{1, 2\}$ and $V_2 = \{3, 5\}$. For the inner part, we execute $K=30$ iterations of the projected subgradient algorithm. We summarize the running results of both models in Figure~\ref{fig:two_layer_5}. 

\begin{figure}[H]
    \centering
    \begin{subfigure}{0.95\textwidth}
        \centering
        \includegraphics[width=\linewidth]{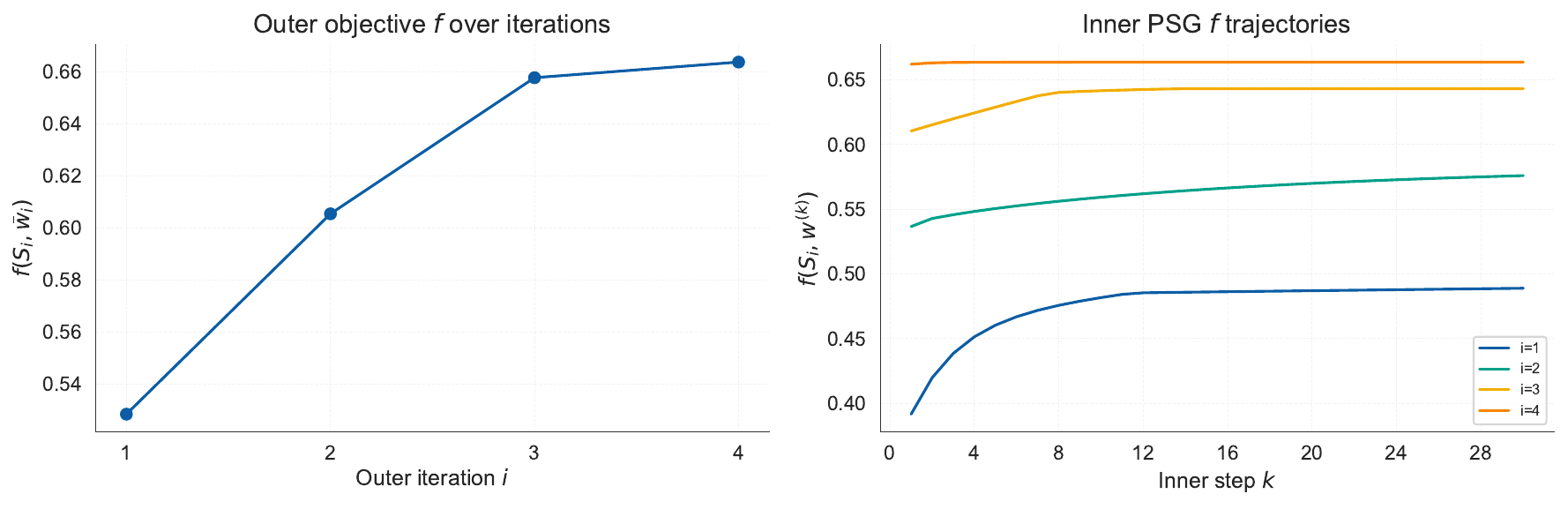}
        \caption{Curie-Weiss model}
        \label{fig:two_layer_CW_5}
    \end{subfigure}
    \begin{subfigure}{0.95\textwidth}
        \centering
        \includegraphics[width=\linewidth]{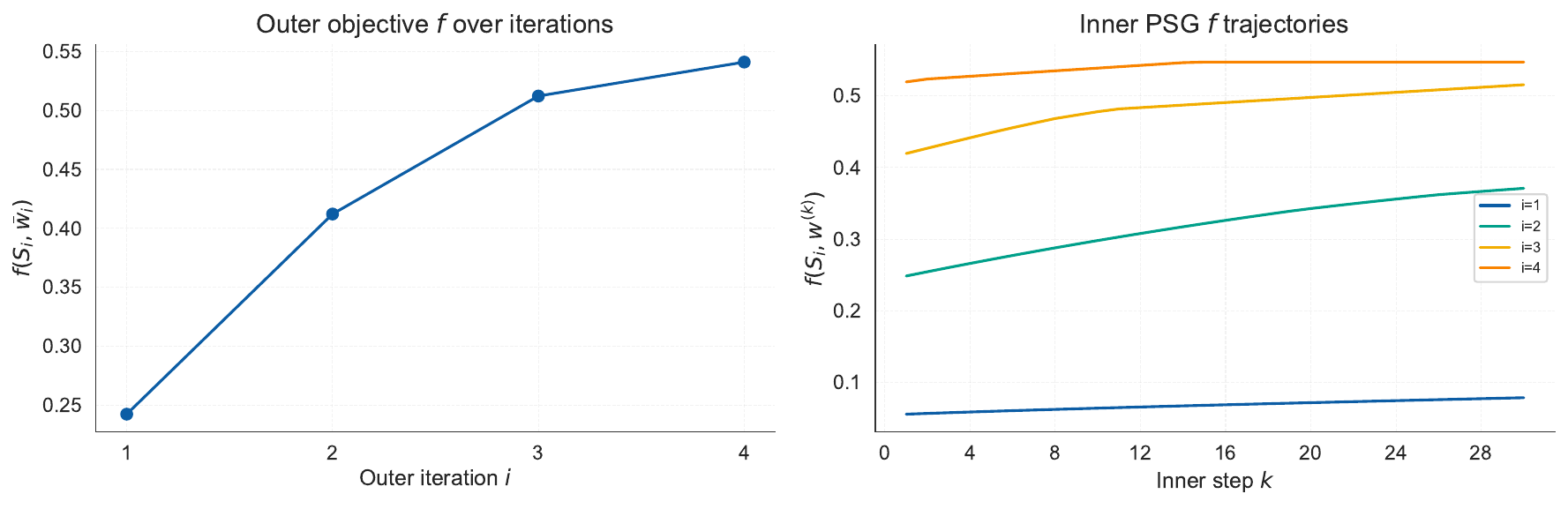}
        \caption{Bernoulli-Laplace level model}
        \label{fig:two_layer_BL_5}
    \end{subfigure}
    \caption{Trajectory plot of Algorithm~\ref{alg:max_max} for both models ($d=5$).}
    \label{fig:two_layer_5}
\end{figure}

For the Curie-Weiss model (Figure~\ref{fig:two_layer_CW_5}), the final weight is $\overline{\mathbf{w}}_l = (0.72, 0.00, 0.00, 0.00, 0.28)$, and the final partition set is $\mathbf{S}_l = \{S_1, S_2\}$, where $S_1 = \{2\}$ and $S_2 = \{3, 5\}$. It shows that after the final round of Algorithm~\ref{alg:max_max}, the resultant weight vector of the max-min-max optimization problem is attained by combining the base transition matrix $P$ and the transition matrix with the highest mixing rate $P^{16}$.

For the Bernoulli-Laplace level model (Figure~\ref{fig:two_layer_BL_5}), the final weight is $\overline{\mathbf{w}}_l = (0.97, 0.03, 0.00, 0.00, 0.00)$, and the final partition set is $\mathbf{S}_l = \{S_1, S_2\}$, where $S_1 = \{2\}$ and $S_2 = \{3, 5\}$. It shows that after the final round of Algorithm \ref{alg:max_max}, the convex hull of family $\mathcal{B}$ concentrates on the base transition matrix $P$.

Similar to the numerical experiments in Section \ref{subsec:num_subg}, we then look into the experiments associated with the family of transition matrices including lazy random walk, precisely, we choose \begin{align*}
    \mathcal{B} = \left\{P, P^2, P^4, \frac{1}{4} I + \frac{3}{4} P, \frac{1}{2} (I+P), \frac{3}{4} I + \frac{1}{4} P \right\}.
\end{align*}
We summarize the results in  Figure~\ref{fig:two_layer_lazy}.

\begin{figure}[H]
    \centering
    \begin{subfigure}{0.95\textwidth}
        \centering
        \includegraphics[width=\linewidth]{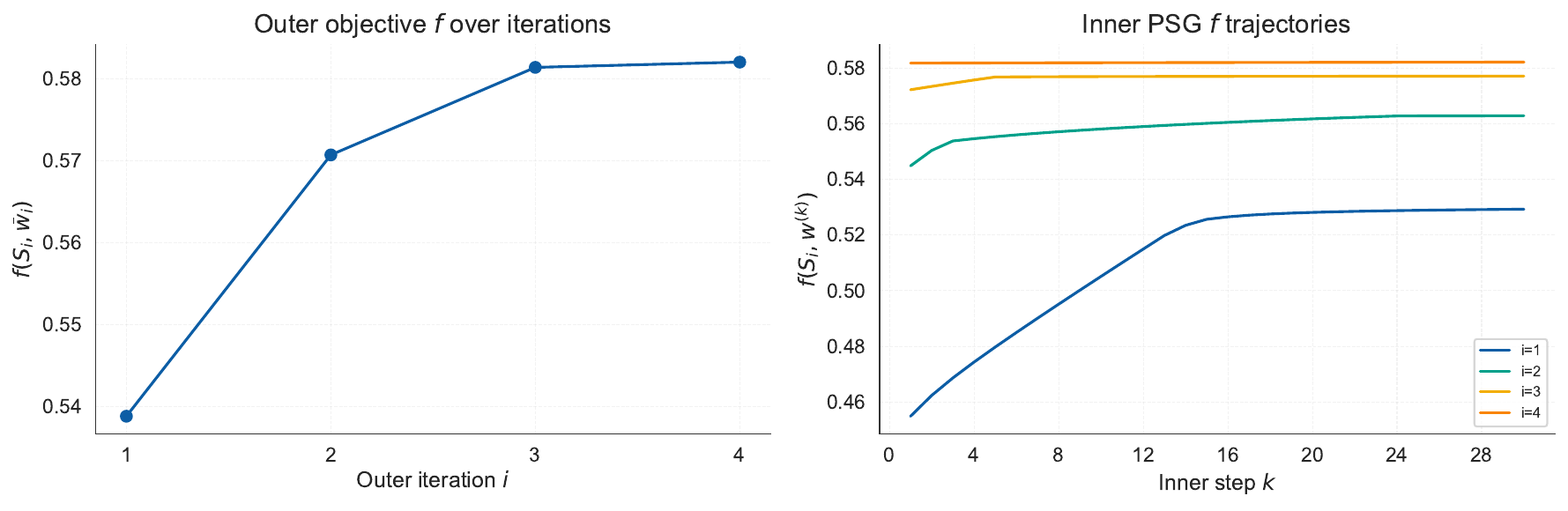}
        \caption{Curie-Weiss model}
        \label{fig:two_layer_CW_lazy}
    \end{subfigure}
    \begin{subfigure}{0.95\textwidth}
        \centering
        \includegraphics[width=\linewidth]{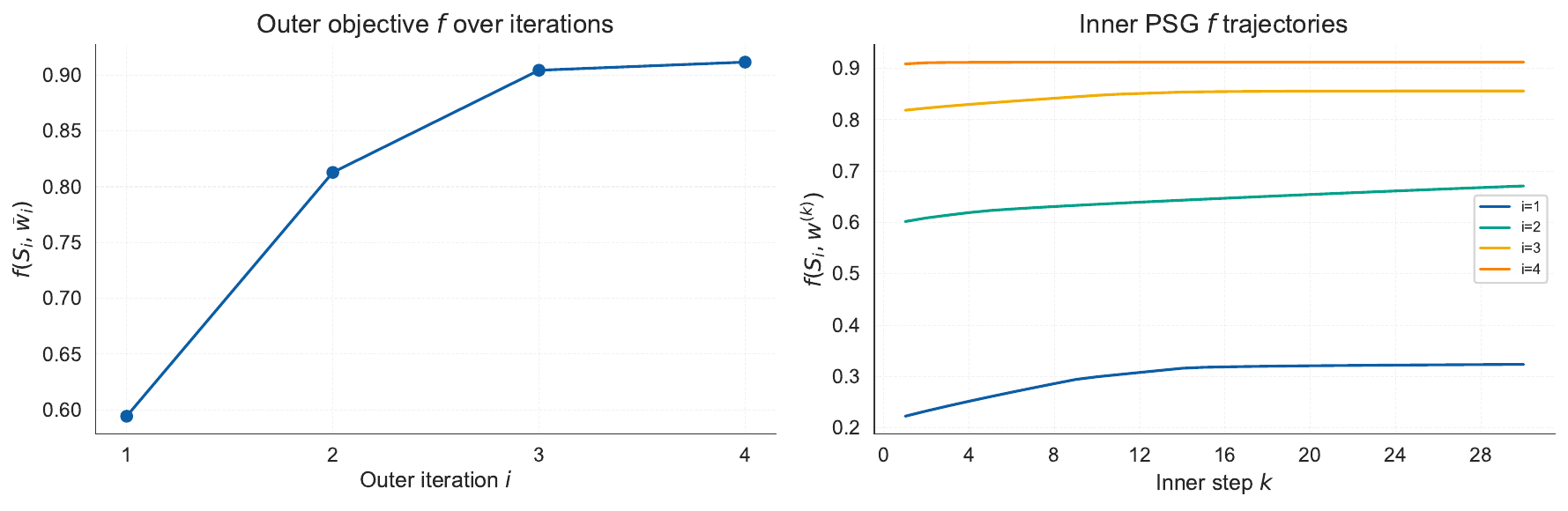}
        \caption{Bernoulli-Laplace level model}
        \label{fig:two_layer_BL_lazy}
    \end{subfigure}
    \caption{Trajectory plot of Algorithm~\ref{alg:max_max} for both models (incl. lazy matrices).}
    \label{fig:two_layer_lazy}
\end{figure}

For the Curie-Weiss model (Figure~\ref{fig:two_layer_CW_lazy}), the final weight is \[
\overline{\mathbf{w}}_l = \Big(
\underbrace{0.37}_{P},\;
\underbrace{0.00}_{P^2},\;
\underbrace{0.33}_{P^4},\;
\underbrace{0.00}_{\frac{1}{4}I+\frac{3}{4}P},\;
\underbrace{0.00}_{\frac{1}{2}(I+P)},\;
\underbrace{0.30}_{\frac{3}{4}I+\frac{1}{4}P}
\Big),
\]
and the final partition set is $\mathbf{S}_l = \{S_1, S_2\}$, where $S_1 = \{2\}$ and $S_2 = \{3, 5\}$. The final weight vector $\overline{\mathbf{w}}_l$ concentrates on three modes, which indicates that the final weight is obtained by combining the slowest $\frac{3}{4}I + \frac{1}{4}P$ and the fastest $P^4$ directions with the base chain $P$.

For the Bernoulli-Laplace level model (Figure~\ref{fig:two_layer_BL_lazy}), the final weight is \[
\overline{\mathbf{w}}_l = \Big(
\underbrace{0.50}_{P},\;
\underbrace{0.00}_{P^2},\;
\underbrace{0.00}_{P^4},\;
\underbrace{0.00}_{\frac{1}{4}I+\frac{3}{4}P},\;
\underbrace{0.00}_{\frac{1}{2}(I+P)},\;
\underbrace{0.50}_{\frac{3}{4}I+\frac{1}{4}P}
\Big),
\]
and the final partition set is $\mathbf{S}_l = \mathbf{V}$, which means that Algorithm~\ref{alg:max_max} selects the whole ground set as the subset. The final output $\overline{\mathbf{w}}_l$ concentrates on two matrices, which indicates that the final result is obtained by averaging the chain with the slowest mixing rate $\frac{3}{4}I + \frac{1}{4}P$ and the base chain $P$.

We proceed to analyze higher-dimensional cases of both models with $d=8$ and cardinality constraint $l=7$, and choose the ground set as $\mathbf{V} = \{V_1, V_2\}$, where $V_1 = \{1, 2, 3, 4\}$ and $V_2 = \{5, 6, 7\}$. We choose the family of the transition probability matrices to be $\mathcal{B} = \{P, P^2, P^4, P^8, P^{16}\}$. For the inner part, we execute $K=150$ iterations of the projected subgradient algorithm. The trajectory plots of both models are summarized in Figure~\ref{fig:two_layer_8}.

\begin{figure}[H]
    \centering
    \begin{subfigure}{0.95\textwidth}
        \centering
        \includegraphics[width=\linewidth]{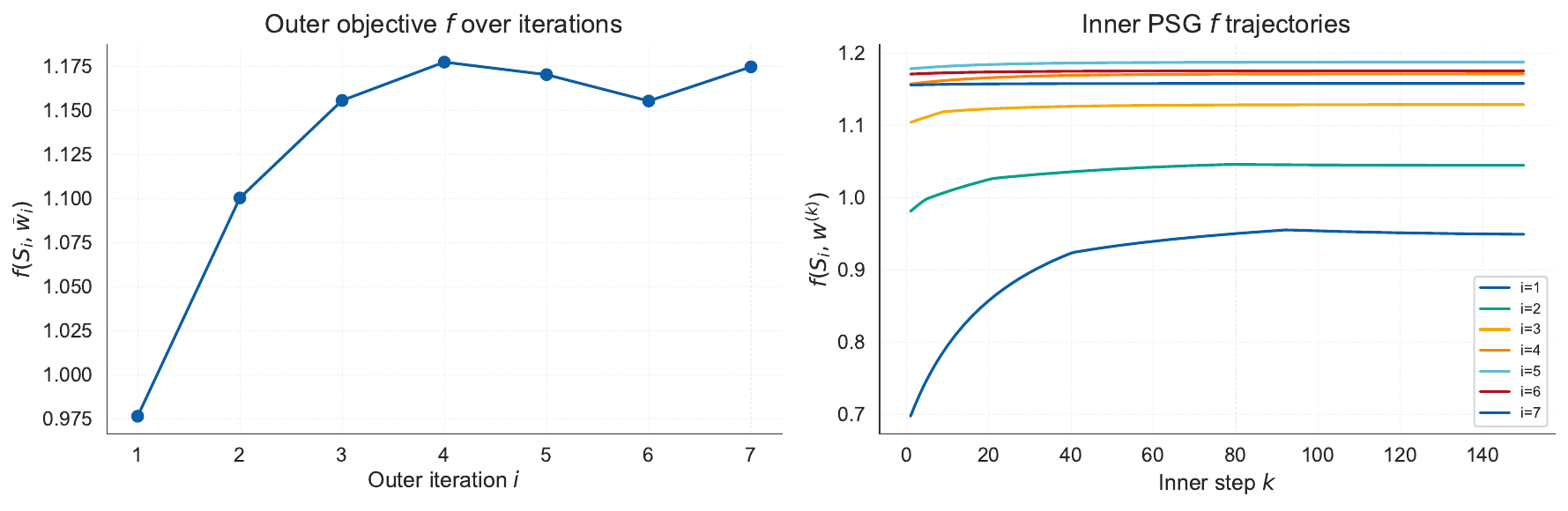}
        \caption{Curie-Weiss model}
        \label{fig:two_layer_CW_8}
    \end{subfigure}
    \begin{subfigure}{0.95\textwidth}
        \centering
        \includegraphics[width=\linewidth]{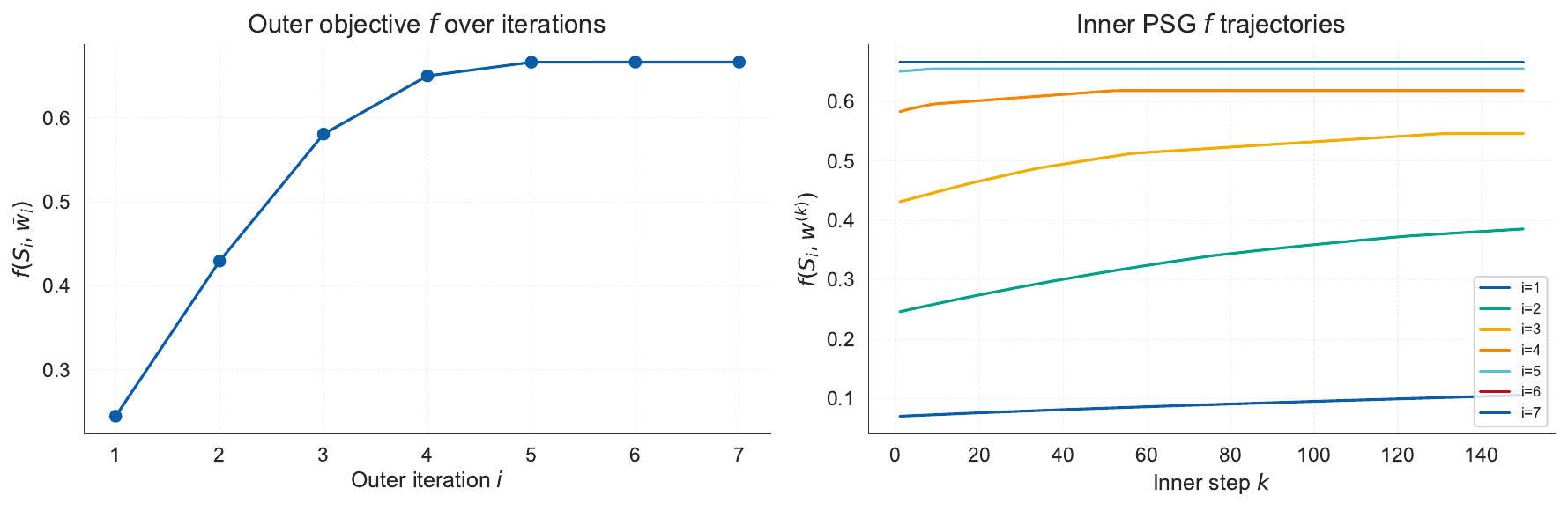}
        \caption{Bernoulli-Laplace level model}
        \label{fig:two_layer_BL_8}
    \end{subfigure}
    \caption{Trajectory plot of Algorithm~\ref{alg:max_max} for both models ($d=8$).}
    \label{fig:two_layer_8}
\end{figure}

For the Curie-Weiss model (Figure \ref{fig:two_layer_CW_8}), the objective value $f(\mathbf{S}_i, \overline{\mathbf{w}}_i)$ is not monotonically non-decreasing, as both the generalized distorted greedy algorithm (Algorithm 3 of \cite{lai2025information}) and the projected subgradient algorithm (Algorithm~\ref{alg:subg}) do not guarantee monotonicity. The final partition set is $\mathbf{S}_l = \mathbf{V}$, which means that the algorithm selects the ground set as the subset. After the final round of Algorithm~\ref{alg:max_max}, the final weight is $\overline{\mathbf{w}}_l = (0.70, 0.00, 0.00, 0.00, 0.30)$, which concentrates on the base transition matrix $P$ and the matrix with fastest mixing $P^{16}$.

For the Bernoulli-Laplace level model (Figure~\ref{fig:two_layer_BL_8}), the final weight is $\overline{\mathbf{w}}_l = (1.00, 0.00, 0.00, 0.00, 0.00)$ and the final partition set is $\mathbf{S}_l = \{S_1, S_2\}$, where $S_1 = \{1, 2, 3\}$ and $S_2 = \{5, 6, 7\}$. It shows that after the final round of Algorithm~\ref{alg:max_max}, the weight of the max-min-max optimization reaches closely to the base transition matrix $P$.

\section*{Declarations}

\paragraph{Funding.}
Michael Choi acknowledges the financial support of the projects A-8001061-00-00, NUSREC-HPC-00001,
NUSREC-CLD-00001, A-0000178-01-00, A-0000178-02-00 and A-8003574-00-00 at National University of Singapore.

\paragraph{Competing interests.}
Both authors have no relevant financial or non-financial interests to disclose.

\paragraph{Data availability.}
No data was used for the research described in the article.

\paragraph{Author contributions.}
Michael Choi and Zheyuan Lai jointly contributed to idea formulation, execution, and manuscript writing. Zheyuan Lai performed the numerical experiments. Michael Choi supervised the project. 

\bibliography{sn-bibliography}% common bib file
%% if required, the content of .bbl file can be included here once bbl is generated
%%\input sn-article.bbl

\end{document}